\newmdenv[linecolor=black,backgroundcolor=white]{myframe}
\newlength\defaultparindent
\newcommand{\Real}{\ensuremath{\mathbb{R}}}
\def\AVI{{\rm AVI}}
\def\t{^\top}
\def\fnat{{\bf F}^{\rm nat}}
\newcommand{\norm}[1]{\left\lVert#1\right\rVert}
\def \eg{e.g.\@\xspace}
\newcommand{\ie}{i.e.\@\xspace}
\def\Ktilde{\widetilde K}
\def\Mtilde{\widetilde M}
\def\qtilde{\widetilde q}
\def\xtilde{\skew3\widetilde x}
\def\viproblem#1#2#3{\fbox
		 {\begin{tabular*}{0.95\textwidth}
{@{}l@{\extracolsep{\fill}}l@{\extracolsep{6pt}}l@{\extracolsep{\fill}}c@{}}
				#1 &#2 & $#3 $ 
			\end{tabular*}}}
\def\VI{{\rm VI}}
\def\norm#1{\|#1\|}
\def\Ebb{\mathbbm{E}}
\def\st{\mbox{subject to}}
\newcommand{\inv}{^{-1}}
\def\aur{\;\textrm{and}\;}
\def\ext{{\rm ext}}
\def\proj{\sqrt{\frac{n}{k}}R\t}
\begin{document}

\title{Dimensionality Reduction of Affine Variational Inequalities Using Random Projections
}

\author{Bharat Prabhakar         \and
        Ankur A. Kulkarni 
}

\institute{Bharat Prabhakar \at
              Department of Electrical Engineering, Indian Institute of Technology Bombay\\
                           \email{prabhakar.bharat@iitb.ac.in}          
           \and
           Ankur A. Kulkarni \at
             Systems and Control Engineering, Indian Institute of Technology Bombay\\
		\email{kulkarni.ankur@iitb.ac.in}
}

\date{Received: date / Accepted: date}

\maketitle

\begin{abstract}
We present a method for dimensionality reduction of an affine variational inequality (AVI) defined over a compact feasible region. Centered around the Johnson Lindenstrauss lemma~\cite{johnson1984extensions}, our method is a randomized algorithm  that produces with high probability an approximate solution for the given AVI by solving a lower-dimensional AVI. The algorithm allows the lower dimension to be chosen based on the quality of approximation desired. The algorithm can also be used as a subroutine in an exact algorithm for generating an initial point close to the solution. The lower-dimensional AVI is obtained by appropriately projecting the original AVI on a randomly chosen subspace. The lower-dimensional AVI is solved using standard solvers and from this solution an approximate solution to the original AVI is recovered through an inexpensive process. Our numerical experiments corroborate the theoretical results and validate that the algorithm provides a good approximation at  low dimensions and substantial savings in time for an exact solution.
\keywords{Dimensionality reduction \and Random projection\and Affine variational inequality \and Johnson Lindenstrauss lemma}
\end{abstract}

\section{Introduction}
Technological advancements have enabled the collection and storage of a tremendously large amount of data. In parallel, the Internet has been changing the manner in which systems interact. As a result of this, an overwhelming amount of information is being generated and stored. A report by Harvard Magazine claims that, ``the total data accumulation of just the past two years -- a zettabyte -- dwarfs the prior record of human civilization"~\cite{bigdataharvard}, aptly justifying the name Big Data.

This paper concerns a challenge thrown up by high-dimensional problems in optimization and control that have arisen due to the growing prominence of such `big' or very large data sets~\cite{nocedal99numerical}. Exact algorithms for such problems can be computationally burdensome even if the algorithm has polynomial complexity. Whereas in contexts such as online optimization~\cite{wang2010fast} a  conceivable requirement could be not of the exact solution, but rather of a quick approximation, in the spirit of the Latin proverb \textit{bis das si cito das}\footnote{Twice you give, if you give quickly.}. 
We are motivated by this specific need where speed is of essence and accuracy can be sacrificed to some extent, if it means large savings in  time. 

This paper focuses on \textit{affine variational inequalities}. Variational inequalities are a versatile class of problems that generalize convex optimization~\cite{facchinei03finiteI}, saddle-point problems, Nash games and generalized Nash games~\cite{facchinei07ongeneralized,kulkarni12revisiting,kulkarni09refinement}, amongst others~\cite{facchinei03finiteI}. We consider affine variational inequalities with compact feasible regions. They are an important subclass, which include, \eg, constrained convex quadratic programming with compact feasible regions. Such quadratic programs are useful problems in their own right and are subproblems for the widely used \textit{sequential quadratic programming} algorithm for nonlinear programs~\cite{snopt,nocedal99numerical}. Affine variational inequalities also capture commonly used models for oligopolistic competition, such as Nash-Cournot games~\cite{facchinei03finiteI}. 

We present a dimensionality reduction technique for solving high-dimensional affine variational inequalities  \textit{approximately}. The method is 
\textit{probabilistic} in the sense that one can only guarantee that it works with ``high probability''. However, in exchange for this, we obtain a substantial saving in time. For  a polyhedral set $ K \subseteq \Real^n, $ a matrix $ M \in \Real^{n \times n} $ and a vector $ q \in \Real^n $,
an affine variational inequality (AVI) $ \AVI(K,M,q) $  is  the following problem,
$$ \viproblem{AVI$(K,M,q)$}{Find $ x\in K $ such that\ }{(y-x)\t(Mx+q)\geq 0 \qquad \forall \ y \in K.} $$
Given a high-dimensional (deterministic) AVI, the method derives a \textit{random} AVI from it which is low-dimensional. The lower-dimensional AVI is solved using standard solvers. Using the solution of the lower-dimensional AVI, a candidate solution of the high-dimensional AVI is generated through an inexpensive process. A probabilistic guarantee is obtained on the event that the error satisfies a bound.

The lower-dimensional AVI is obtained by \textit{projecting} the high\hyp{}dimensional problem on a subspace chosen uniformly at random. We implement this projection by multiplying by a suitably constructed random matrix. 
By the celebrated {\it Johnson Lindenstrauss Lemma} (JL lemma)~\cite{johnson1984extensions} we get that if a set of $ m $ points in a high dimensional space, are projected this way to a $ k $-dimensional subspace, then the probability that pairwise distances are at most $ \epsilon $-distorted, concentrates. This probability can be made to approach unity by appropriately choosing $ k $. E.g., for this probability to be $ 1-\delta $, we get $ k =O\left(\frac{\ln (m/\delta)}{\epsilon^2}\right) $. When applied to our setting, approximate distance preservation also allows for approximate preservation of inner products, which translates to an approximate solution of the AVI. 

Our main result is that with high probability the (deterministic) preimage (under the projection operation) of the solution  of the lower-dimensional AVI  approximately solves the given high-dimensional AVI. We recover a random approximation to this deterministic preimage by solving a linear program followed by a norm minimization quadratic program. Thanks to a remarkable result of Candes and Tao~\cite{candes2006near} we get that  the recovered solution approximates the required deterministic preimage with high probability. 

This framework also yields the following exact algorithm (with probabilistic guarantees): using the above technique one generates a point that is close to the true solution and this point is supplied to a standard solver as an initial point to obtain an exact solution. It is plausible that this method would improve the run-time for solvers that benefit from the ``local'' nature of the initial point. We have found this to be the case for the \textsc{PATH} solver~\cite{pathweb}. 

We emphasize that our algorithm does not assume any structure on the AVI such as monotonicity (the only assumption on the AVI is that the set $ K $ is compact). It seems plausible that further assumptions on the matrix $ M $ or on the set $ K $ may improve the theoretical results. 

Our numerical computations support the theoretical results. In particular, the exact algorithm (obtained by supplying the approximate solution as initial point) appears promising; in the examples we tried, considerable savings were obtained. For the theoretical results to hold we  require the lower-dimensional problem to be of size $ O(\ln \eta) $, where $ \eta $ is the number of extreme points of $ K. $ 
If $ K $ described by $ m $ inequality constraints, then for fixed $ m $, we have $ \ln \eta =O(\ln n)$ and in general $ \ln \eta $ is at most $ O(n) $, where $ n $ is dimension of the ambient space of $ K$.   
Although in the worst case $ O(\ln \eta) $ may not be significantly smaller than $ n $, in practice we have found that our algorithm performs well even for small values of the lower dimension. 

Conceptually speaking, this work exploits a delicate link between convex analysis and metric embeddings. The JL lemma may be viewed as a metric embedding result~\cite{goodman2004handbook}, with no obvious convex analytic properties. However in a Euclidean space, a metric embedding also implies the $\epsilon$-preservation of inner products, which under convexity allows optimality to be $\epsilon$-preserved.

This paper is organized as follows. Following the introduction, we present some background on the subject and define the desiderata of our algorithm. Section \ref{sec:random} formally introduces concepts pertaining to random projections required for our main results. The algorithm is introduced in Section \ref{sec:algorithm}. The proof of correctness is encompassed in  Section~\ref{sec:correctness}. We discuss some aspects pertaining to the algorithm, including the lower dimension, in Section~\ref{sec:remarks}. 
Section~\ref{sec:sim} contains numerical results and we conclude in Section~\ref{sec:disc}.

\subsection{Background}
For a closed convex set $ K \subseteq \Real^n$ and a continuous function $ F :\Real^n \rightarrow \Real^n, $ a variational inequality (VI) VI$(K, F)$ is the following problem~\cite{facchinei03finiteI},
$$ \viproblem{VI$(K,F)$}{\ Find $ x\in K $ such that\ }{(y-x)\t F(x)\geq 0 \qquad \forall \ y \in K.} $$

Solving a VI amounts to ensuring an \textit{angle condition}: the solution is a point $ x\in K $ such that $ F(x) $ makes 
an acute angle with all directions `$ y-x $' as $ y $ ranges over $ K. $ The solution of a convex optimization can also be written in this form and is a special case of the VI. The Nash equilibrium of a game is a simultaneous solution of several convex optimization problems and can also be captured by a VI~\cite{facchinei03finiteI}. Recent results~\cite{facchinei07ongeneralized,kulkarni12revisiting,kulkarni09refinement} have shown that certain equilibria of \textit{generalized} Nash games can also be captured by VIs. Besides these applications, VIs also generalize general equilibrium models, frictional contact problems and problems in finance, even as new applications continue to be considered. We refer the reader to~\cite{facchinei03finiteI} for more on this topic.

Solving $\VI(K,F)  $ is also equivalent to finding the zero of the function $ \fnat_K:\Real^n \rightarrow \Real^n $~\cite{facchinei03finiteI},
\begin{equation}
\fnat_K(x)=x-\Pi_K(x-F(x)), \label{eq:fnat}
\end{equation}
called the \textit{natural map} of the VI. $ \fnat_K $ is a continuous function and hence 
$ \norm{\fnat_K(x^*)} $ quantifies the ``quality'' of an approximate solution $ x^*$. Note that  $ \fnat_K $ is nonlinear except in rare (possibly uninteresting) cases. 

In practice, VIs are solved using a host of techniques, making use of the natural map as well as the \textit{normal map}~\cite{facchinei03finiteI}.  
We are not explicitly concerned with algorithms for solving VIs, since our method employs an off-the-shelf solver for solving the lower-dimensional AVI. We refer the reader to~\cite{facchinei03finiteII} for more on algorithms and to~\cite{dirkse93path,pathweb} for implementations.

Our work is essentially about efficiently solving VIs. 
There is a large body of work on solving large-dimensional optimization problems and variational inequalities with certain sparsity structure.  These include, for instance, Benders' decomposition~\cite{benders62partitioning} and the host of applications it spawned to stochastic optimization problems~\cite{vanslyke69lshaped,kulkarni10benders} and complementarity problems (see, \eg,~\cite{shanbhag06decomposition} and references therein). In a somewhat similar direction lie the series of works on splitting methods (see, \eg,~\cite{luo1992error} and related works), and more recent decomposition methods, \eg,~\cite{luna2014class}. 
These lines of research exploit the structure of the problem to decompose the larger problem into smaller subproblems. The algorithms so developed are exact algorithms with deterministic guarantees. In contrast, our algorithm does not assume any sparsity, but it is an approximate algorithm which works under probabilistic guarantees. Further, as mentioned in the introduction, it may be used as a subroutine in an exact algorithm.

To the best of our knowledge our work is the first application of random projections for dimensionality reduction in VIs. However, it has been preceded by many random projection-based algorithms in the computer science community (see, \eg, the monograph by Vempala~\cite{vempala04random}). In the operations research and control community, there are two applications of random projections  we are aware of. First~\cite{barman08note}, where a low-\textit{rank} approximation is used to approximately find the zero of a linear equation, and second~\cite{kulkarni09hilbert} where a similar approximation is used within a Newton method-based stochastic approximation. Our work differs from~\cite{barman08note} in two fundamental ways.  First, we seek a  low-dimensional approximation (rather than a low-rank one), and second, solving a VI reduces to solving the nonlinear equation~\eqref{eq:fnat} whereas~\cite{barman08note} critically relies on linearity.

\subsection{Problem definition}
We now formally define the problem we aim to solve. 
The objective of this paper is to solve  $ \AVI(K,M,q) $, i.e.,  where $ K \subseteq \Real^n$ is a compact polyhedron (a polytope), $ M\in \Real^{n\times n} $ and $ q \in \Real^n. $ The case we are interested in is where $ n $ is large. We seek an approximation algorithm that satisfies the following requirements
\begin{enumerate}
\item The most expensive step in the algorithm must involve solving a lower dimensional problem, \ie, the algorithm must operate in a lower-dimensional space. 
\item The lower dimensional problem should also be an affine variational inequality.
\item  The algorithm may be approximate, \ie, the candidate solution generated by the algorithm need not solve the problem exactly, but it should be a good approximation. 
\item The guarantee for the algorithm need not be deterministic, i.e., probabilistic guarantees on the solution would suffice.
\end{enumerate}

With this specified, we now proceed with the main contents of the paper, beginning with an overview of random projections in the following section.

\section{Random Projections} \label{sec:random}

In this section we review some results from the theory of random projections. Random projection is a particular case of an embedding of one metric space into another, and is as such is a part of a deeper mathematical study~\cite{goodman2004handbook}. We limit our survey here to operational aspects and to results relevant to our algorithm. 

\subsection{How to randomly project} \label{sec:construction}
Random projection involves the projection of vectors lying in a higher dimensional space to a {\it randomly} chosen lower-dimensional subspace. Note that this projection need not be Euclidean, \ie, the subspace need not be aligned with the basis vectors from the original space. A vector is projected  by multiplying the vector by a suitable random matrix; the choice of this matrix specifies the type of randomness introduced in the projection. 

Several different methods of constructing the random projection matrix have been studied in the literature. In this paper, we project the vectors on a {\it uniformly} random $k$-dimensional subspace. Such a subspace can be realized by choosing a uniformly random orthonormal matrix~\cite{vempala04random}. Below we show how such a matrix can be constructed.
 
\subsubsection{Constructing a uniformly random orthonormal matrix} \label{sec:constR}
An $n \times k$-dimensional real valued orthonormal matrix $R$ is \textit{uniformly random} if $R$ is uniformly distributed over the manifold, called Stiefel manifold~\cite{tulino2004random}, of real $n \times k$  matrices such that $R\t R = I$. 
We construct our  $n \times k$-dimensional random projection matrix $R$ as follows.
\begin{enumerate}
\item[{\bf R 1.}] Construct a matrix $ R_1 $ with each entry chosen independently from the distribution $N(0,\frac{1}{k})$.
\item[{\bf R 2.}] Orthonormalize the columns of $ R_1 $ using Gram-Schmidt process (QR-decomposition) and form the required matrix $R$ using these resultant vectors as columns. 
\end{enumerate}

We first observe that the matrix $ R_1 $ above is full rank almost surely. This is formalized in the following lemma.
\begin{lemma} \label{lem:rank}
The $n \times k$-dimensional random matrix $R_1$ (where $n>k$), obtained in the step {\bf R 1} in the construction described above, has a rank equal to $k$ with probability 1. 
\end{lemma}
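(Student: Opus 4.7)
The plan is to show that the event $\{\operatorname{rank}(R_1) < k\}$ lies in a Lebesgue-null subset of $\Real^{nk}$, and then invoke the absolute continuity of the joint Gaussian law of the entries of $R_1$ to conclude that this event has probability zero.

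First I would describe the offending set explicitly. Since $R_1$ is $n \times k$ with $n > k$, we have $\operatorname{rank}(R_1) < k$ if and only if all $\binom{n}{k}$ maximal $k \times k$ minors of $R_1$ vanish simultaneously. Each such minor is a polynomial in the $nk$ entries of $R_1$, and none of these polynomials is identically zero (for instance, the minor formed by rows $1,\dots,k$ equals $1$ on the identity-like configuration). Thus $\{R_1 : \operatorname{rank}(R_1) < k\}$ is contained in the zero set of a nonzero polynomial on $\Real^{nk}$, which is well-known to have Lebesgue measure zero in $\Real^{nk}$.

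Next I would transfer this to the probabilistic statement. The entries of $R_1$ are i.i.d.\ $N(0,1/k)$, so the joint law on $\Real^{n \times k}$ has a density with respect to Lebesgue measure. Any Lebesgue-null set therefore has probability zero under this law, and in particular $\Pr(\operatorname{rank}(R_1) < k)=0$, giving $\operatorname{rank}(R_1)=k$ almost surely, as required.

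An alternative route, which I would mention as a sanity check, is a column-by-column induction: the first column of $R_1$ is a nondegenerate Gaussian vector in $\Real^n$, hence nonzero a.s.; conditioned on the first $j<k$ columns spanning a $j$-dimensional subspace $V_j \subset \Real^n$, the $(j+1)$-th column is an independent $N(0,(1/k)I_n)$ vector and thus falls into the proper subspace $V_j$ (a Lebesgue-null set in $\Real^n$) with probability zero. The main (and only) subtlety is the measure-theoretic step of turning ``zero set of a nontrivial polynomial'' into ``Lebesgue-null'', which is standard; beyond this the proof is essentially mechanical, so I do not anticipate any real obstacle.
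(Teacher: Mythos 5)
Your proposal is correct, but your primary argument takes a genuinely different route from the paper. The paper proves the lemma by normalizing each column of $R_1$ to a unit vector, observing that each normalized column is uniformly distributed on the sphere $S^{n-1}$, and then arguing by induction on the columns: the span of the first $r-1$ columns meets $S^{n-1}$ in a lower-dimensional subsphere of measure zero, so the $r$-th column avoids it almost surely. This is essentially the ``alternative route'' you sketch at the end (your version works with proper subspaces being Lebesgue-null in $\Real^n$ rather than subspheres being null on $S^{n-1}$, but the inductive skeleton is the same). Your main argument --- that $\{\operatorname{rank}(R_1)<k\}$ is contained in the common zero set of the $\binom{n}{k}$ maximal minors, each a nonzero polynomial in the $nk$ entries, hence a Lebesgue-null subset of $\Real^{nk}$, which has probability zero under the absolutely continuous Gaussian law --- is cleaner in that it avoids any conditioning or induction, and it is more general: it shows the conclusion holds for \emph{any} entrywise distribution whose joint law is absolutely continuous, not just the Gaussian. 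The paper's inductive argument, by contrast, leans on the specific fact that the normalized columns are uniform on the sphere, a fact it reuses later in the proof of Lemma~\ref{lem:normjl}, which is presumably why that presentation was chosen. Both proofs are complete; the only step you flag as needing care (a nontrivial polynomial's zero set is Lebesgue-null) is indeed standard.
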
  
\begin{proof}
Consider the matrix $ Y $ formed by normalizing each column of $ R_1 $ to a unit vector (each column is not a $ 0 $-vector almost surely). It suffices to show that $ Y $ has rank $ k. $  
Let the columns of $Y$ be denoted by $Y^k = (Y_1,...Y_k)$. Recall that each vector $Y_i, 1\leq i \leq k$ is a uniformly distributed point on the unit sphere $S^{n-1}$~\cite{knuth1998art}. We prove the claim by induction. The first vector $Y_1$ is linearly independent with probability 1, as it is not a zero vector almost surely. Assume that $1<r \le k$ and that the first $r - 1$ vectors are linearly independent with probability 1. Then with probability 1 these $r -1$ vectors span a subspace V of dimension  $r -1$, which intersects $S^{n-1}$ in an $(r-2)$-dimensional ``subsphere" $S^{r-2}_V$. This subsphere forms a measure zero set under the uniform probability measure on $S^{n-1}$. Therefore the probability that $Y_r$ lies in this subsphere is zero. It follows that with probability 1 the vectors $Y_1, \dots Y_{r-1}, Y_r$ are linearly independent. Hence, by induction, rank($Y$) = $k$.
\end{proof}

From Lemma~\ref{lem:rank} it follows that the matrix $ R $ produced in Step {\bf R 2} is also of full column rank. To show that $ R $ is indeed uniformly distributed on the Steifel manifold, we invoke the ``real'' counterpart of Lemma \ref{lem:subspace} from~\cite{tulino2004random}.
\begin{lemma}[\cite{tulino2004random}] \label{lem:subspace}
Let $H$ be an $k \times n$ real valued standard Gaussian matrix with $n \ge k$. Denote its QR-decomposition by $H = \hat Q \hat R$. The upper triangular matrix $\hat R$ is independent of $\hat Q$, which is uniformly distributed over the manifold of $k \times n$ matrices such that $\hat Q \hat Q\t = I$. 

 \end{lemma}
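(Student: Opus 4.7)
The plan is to exploit the rotational invariance of the standard Gaussian distribution --- \ie, $UH$ has the same distribution as $H$ for any deterministic orthogonal matrix $U$ of compatible size --- together with the almost-sure uniqueness of the QR factorization under a fixed sign convention. These two ingredients will simultaneously yield both the uniform distribution of $\hat Q$ on the Stiefel manifold and its independence from $\hat R$. Uniqueness can be arranged by fixing the convention that the diagonal entries of $\hat R$ are strictly positive; since $H$ has full rank almost surely (the argument being essentially that used in the proof of Lemma~\ref{lem:rank}), the pair $(\hat Q, \hat R)$ is then uniquely determined by $H$ on an event of probability one, and the map $H \mapsto (\hat Q, \hat R)$ is Borel measurable there.

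Next I would exploit invariance. For any orthogonal $U$ of the appropriate size, write $UH = (U\hat Q)\hat R$. The factor $U\hat Q$ still satisfies the orthonormality constraint defining the Stiefel manifold, while $\hat R$ is unchanged and therefore still upper triangular with positive diagonal. By the uniqueness just established, $(U\hat Q, \hat R)$ is \emph{the} QR factorization of $UH$. Combining this with the distributional equality $UH \stackrel{d}{=} H$ yields the joint equality $(U\hat Q, \hat R) \stackrel{d}{=} (\hat Q, \hat R)$.

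Finally I would extract both conclusions from this joint equality. Marginalising over $\hat R$ gives $U\hat Q \stackrel{d}{=} \hat Q$ for every orthogonal $U$, so the law of $\hat Q$ is invariant under the (transitive) action of the orthogonal group on the Stiefel manifold; by uniqueness of the invariant probability measure on a compact homogeneous space, this forces $\hat Q$ to be distributed according to the normalised Haar, \ie, uniform, measure. For independence, the joint equality rewrites as $P(\hat Q \in U^{-1} A,\ \hat R \in B) = P(\hat Q \in A,\ \hat R \in B)$ for all Borel sets $A, B$ and all orthogonal $U$, which says that the conditional distribution of $\hat Q$ given $\hat R$ is itself rotationally invariant and hence again equal to the uniform law on the Stiefel manifold, regardless of the realised value of $\hat R$. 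Since this conditional law does not depend on $\hat R$, the two are independent.

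The main subtlety I foresee is not any single step but rather the dimensional and notational bookkeeping: depending on the convention adopted for the QR factorization of a rectangular matrix (tall versus fat, ``thin'' versus ``full'' QR), the roles of the left and right orthogonal actions and the precise definition of the relevant Stiefel manifold must be matched consistently with the sign convention placed on the diagonal of $\hat R$. Once the setup is pinned down, the rotational-invariance argument above is essentially mechanical, and the result then transfers directly to the matrix $R$ constructed in Steps \textbf{R 1}--\textbf{R 2} by applying the lemma to $R_1$ (after trivial rescaling, which does not affect $\hat Q$).
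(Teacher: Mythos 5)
The paper does not actually prove this lemma: it is imported as the ``real counterpart'' of a result quoted from the cited reference and used as a black box, so there is no in-paper argument to compare yours against. Your proof is the standard one and its core is sound: almost-sure full rank plus the positive-diagonal convention gives a unique, measurable QR factorization; rotational invariance of the Gaussian gives the joint equality $(U\hat Q,\hat R)\stackrel{d}{=}(\hat Q,\hat R)$; uniqueness of the invariant probability measure on a compact homogeneous space gives uniformity; and the observation that the conditional law of $\hat Q$ given $\hat R$ is itself rotation-invariant, hence constant in $\hat R$, gives independence.

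One point needs more than ``bookkeeping,'' because it affects the validity of your transitivity claim rather than just notation. With the lemma's stated orientation ($H$ of size $k\times n$, $\hat Q\hat Q^\top=I_k$), the identity $UH=(U\hat Q)\hat R$ forces $U\in O(k)$, and left multiplication by $O(k)$ is \emph{not} transitive on the Stiefel manifold of orthonormal $k$-frames in $\Real^n$ when $k<n$ (it preserves the row space of $\hat Q$), so invariance under that action alone does not pin down the uniform law. To get a transitive action you must run the argument on the tall matrix: take $R_1\in\Real^{n\times k}$ Gaussian with thin QR $R_1=RT$, $R^\top R=I_k$, $T$ upper triangular with positive diagonal, and use $UR_1=(UR)T$ for $U\in O(n)$; the group $O(n)$ does act transitively on $\{Q\in\Real^{n\times k}:Q^\top Q=I_k\}$, and the remainder of your argument then goes through verbatim. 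This is exactly the form in which the lemma is applied to the matrix built in Steps \textbf{R 1}--\textbf{R 2}, so the repair costs nothing; but as written, the parenthetical ``(transitive)'' is only correct for one of the two conventions you leave open, and it is not the one suggested by the displayed equation $UH=(U\hat Q)\hat R$.
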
 
This completes the construction of a matrix $ R $ that is uniformly distributed on the Steifel manifold and has  full column rank.

\subsubsection{Projecting vectors and matrices} Consider a column vector $x  \in \mathbbm{R}^n$ (throughout this paper a vector is automatically to be assumed as a column vector), then the projection of $x$ is given by,
\begin{equation}
y = \sqrt{\frac{n}{k}}R\t x, \label{eq:y}
\end{equation}
The constant $\sqrt{\frac{n}{k}}$ is multiplied to ensure that the expected length of $y$ remains equal to that of $x$ (this property is needed by the Johnson Lindenstrauss lemma as we shall see ahead). Note that the value of constant being multiplied may vary depending upon the construction of the random projection matrix. 

For an arbitrary random vector $ y $ and an arbitrary random matrix $ R $, a vector $ x $ such that $ y = \sqrt{\frac{n}{k}}R\t x $ would, in general, be sample-path dependent (\ie, random). However if $ y $ is indeed a projection of a \textit{deterministic} vector $ x $, we call such 
an $ x $ its \textit{deterministic preimage}.
\begin{definition}
If a vector $y$ is a projection of some deterministic vector $ x $ as in \eqref{eq:y}, it's deterministic preimage is defined to be any  deterministic vector $x_o$, such that $\sqrt{\frac{n}{k}}R\t x_o = y$.
\end{definition}

The columns of a matrix $ Z \in \Real^{n \times d} $ can be thought of as a collection of $d$ $n$-dimensional vectors. Thus, the projection of $ Z $ is given by,
\begin{equation*}
Y = \sqrt{\frac{n}{k}}R\t Z,
\end{equation*}
where $Y \in \mathbbm{R}^{k \times d}$.
\subsection{The Johnson Lindenstrauss lemma}
The Johnson Lindenstrauss lemma~\cite{johnson1984extensions}  is a landmark result that shows that for any finite set of points there exists a mapping such that the distance between any pair of points is approximately equal to the distance between their images under the mapping. A key improvement \cite{frankl1988johnson} obtained later showed that if any finite set of points are projected \textit{randomly} by multiplication with a random orthonormal matrix, their pairwise distances are approximately preserved in the above sense. While the original JL lemma only provides the existence of a distance preserving mapping, at the expense of a probabilistic guarantee, the result from~\cite{frankl1988johnson} provides a \textit{construction} of this mapping.

For the purpose of our paper, we do not need this exact result but only its precursor below which concerns approximately preserving the norm of a single vector; this latter result can be found in~\cite{dasgupta2003elementary}. Since the lemma below differs from the exact result in~\cite{dasgupta2003elementary}, we  provide a sketch of the proof.

\begin{lemma} \label{lem:normjl}
Let $R \in \mathbbm{R}^{n \times k}$ be a random matrix as constructed in Section~\ref{sec:constR}, and let $f(u) = \sqrt{\frac{n}{k}}R\t u$ for $u \in \mathbbm{R}^n$. Then for any $0 < \epsilon < 1$,
\begin{equation*}
P\left((1-\epsilon)\|u\|^2 \le \|f(u)\|^2 \le (1+\epsilon)\|u\|^2\right)   \geq
1- 2e^{-(\epsilon^2/2 - \epsilon^3/3)k/2}
\end{equation*}
\end{lemma}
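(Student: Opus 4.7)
The plan is to exploit the rotational invariance of $R$ (which, by Lemma \ref{lem:subspace}, is uniformly distributed on the Stiefel manifold) to reduce the problem to a clean chi-squared concentration inequality. Since $R$ has orthonormal columns, $RR\t$ is the orthogonal projector onto a uniformly random $k$-dimensional subspace $V$ of $\Real^n$, so $\|R\t u\|^2 = u\t R R\t u = \|P_V u\|^2$. By rotational invariance of the distribution of $V$, this quantity has the same distribution as $\|u\|^2 \cdot \|P_V e_1\|^2$ for any fixed unit vector $e_1$. A standard duality argument (projecting a fixed vector onto a random subspace is equivalent to projecting a random unit vector onto a fixed subspace) then gives
\[
\frac{\|R\t u\|^2}{\|u\|^2} \stackrel{d}{=} \frac{\sum_{i=1}^{k} X_i^2}{\sum_{i=1}^{n} X_i^2},
\]
where $X_1,\ldots,X_n$ are i.i.d.\ $N(0,1)$.

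With this reduction, writing $L = \|f(u)\|^2/\|u\|^2 = (n/k) \cdot \sum_{i=1}^{k} X_i^2 / \sum_{i=1}^{n} X_i^2$, I would bound the upper and lower tails separately by a Chernoff (exponential Markov) argument. For the upper tail, the event $L \ge 1+\epsilon$ rearranges to
\[
\sum_{i=1}^{k}\bigl[n-(1+\epsilon)k\bigr] X_i^2 \;-\; \sum_{i=k+1}^{n} (1+\epsilon)k\, X_i^2 \;\ge\; 0,
\]
and for any $t>0$ in the admissible range I would apply Markov's inequality to $e^{tY}$ where $Y$ is the left-hand side. Independence and the moment generating function of a $\chi^2_1$ variable, $E[e^{sX_i^2}] = (1-2s)^{-1/2}$ for $s<1/2$, reduce this to a clean product of two MGF factors.

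The crux of the argument is choosing the optimal $t$ so that the resulting bound simplifies to $e^{-(\epsilon^2/2-\epsilon^3/3)k/2}$. I would use the specific choice that has become standard in proofs of this flavour (essentially $t \propto \epsilon / n$, tuned so that the chi-squared factors balance), then use the power series inequality $\ln(1+x) \ge x - x^2/2$ valid for $x\ge 0$ (or the complementary inequality for the lower tail) to reduce the logarithm of the MGF bound to $-(\epsilon^2/2 - \epsilon^3/3)k/2$. An analogous Chernoff computation handles $P(L \le 1-\epsilon)$, and a union bound over the two tails produces the factor of $2$ in the stated probability.

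The main obstacle is the book-keeping in the Chernoff step: unlike the textbook case of a single $\chi^2$ random variable, here we have a ratio, which forces us to carry two coupled MGF factors and pick a $t$ that makes both cooperate. Once the reduction to i.i.d.\ Gaussians is in place, however, the remaining analysis is essentially the Dasgupta--Gupta calculation, and I would refer the reader to \cite{dasgupta2003elementary} for the detailed algebra, noting only the straightforward modifications needed because our normalization constant is $\sqrt{n/k}$ rather than $\sqrt{1/k}$ and because our $R_1$ has entries of variance $1/k$ (which is immaterial, since the Q-factor of a Gaussian matrix does not depend on the common entrywise variance).
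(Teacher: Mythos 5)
Your proposal is correct and follows essentially the same route as the paper: both reduce $\|f(u)\|^2/\|u\|^2$ by rotational invariance to the projection of a uniformly random unit vector onto a fixed coordinate subspace (equivalently the ratio $\sum_{i=1}^{k}X_i^2/\sum_{i=1}^{n}X_i^2$ of chi-squared sums), invoke the Dasgupta--Gupta tail bounds for that quantity, and combine the two tails by a union bound to obtain the factor of $2$. The paper simply quotes the two tail inequalities from \cite{dasgupta2003elementary} rather than re-deriving the Chernoff computation you sketch, so the only difference is the level of detail, not the argument.
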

\begin{proof}
Let $ u \in \Real^n $ and without loss of generality assume $ \norm{u}=1. $ The vector $\proj u  $ is the projection of $ u $ on a $ k $-dimensional subspace chosen uniformly at random. As observed in~\cite{dasgupta2003elementary} the distribution of $ R\t u $
is the same as the distribution of the projection of a unit vector chosen uniformly at random to a fixed subspace. Thus, let $X_1,...X_n$ be $n$ independent Gaussian $N(0,1)$ random variables and define  $Y = \frac{1}{\|X\|}(X_1,...X_n)$. As argued in Lemma \ref{lem:rank}, $Y$ is a uniformly random vector lying on the unit sphere $S^{n-1}$. Fix the projection subspace to be one spanned by the first $k$ coordinates and let $Z \in \mathbbm{R}^k$ be the projection of $ Y $ on the first $ k $ coordinates. Define $L := \|Z\|^2$. Clearly, $ \mathbbm{E}[L] = k/n$. From~\cite{dasgupta2003elementary} we get,
\begin{align*}
P(L \le (1 - \epsilon)\mathbbm{E}[L]) \le e^{- \frac{k \epsilon^2}{4}} \qquad \aur  \qquad 
P(L \ge (1+\epsilon)\mathbbm{E}[L]) \le e^{- \frac{k( \epsilon^2/2 - \epsilon^3/3)}{2}}. 
\end{align*}
Now since $ R\t u $ has the same distribution as $Z $, $ \norm{f(u)}^2 $ has the same distribution as $ \frac{L}{\Ebb[L]}, $ from which the result follows.
\end{proof}

It is important to note that unlike in the deterministic result of Johnson and Lindenstrauss, the projection \textit{does not} depend on the vector being projected (the projection only has to be uniformly random). Further, since random projection here is a linear operator, the above result can easily be converted into a {\it distance preservation} result by applying it to a vector corresponding to the difference between two different vectors and then preserving its norm. By applying this projection to a set of vectors and then taking a union bound leads us to the complete version of JL lemma (see \eg,~\cite{vempala04random}). 

Finally note that, the probability in Lemma \ref{lem:normjl} is for the event that the norm of the projected vector lies within a factor of $(1 \pm \epsilon)$ of the norm of original vector. Whereas in {\it expectation}, the value of both these norms is the same, i.e., E$[\|f(u)\|^2] = \|u\|^2$. This underlines the nature of the JL lemma as a concentration phenomenon.
\subsection{Preserving inner products via JL lemma}

We now present the following lemma claiming that the inner products are approximately preserved under random projection. It follows from the norm preserving JL lemma.
\begin{lemma} \label{lem:innerjl}
Let $R \in \mathbbm{R}^{n \times k}$ be a random matrix as constructed in Section \ref{sec:constR}. Define a mapping $f(u) = \sqrt{\frac{n}{k}}R\t u$ for $u \in \mathbbm{R}^n$. Then for any two vectors $u, v \in \mathbbm{R}^n$, and $0<\epsilon <1$,
\begin{equation*}
P(|u \t v - f(u) \t f(v)| \le \epsilon \|u\| \|v\|) \geq 1 -  4e^{-(\frac{\epsilon^2}{2} - \frac{\epsilon^3}{3})k/2}
\end{equation*}
\end{lemma}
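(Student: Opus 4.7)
The plan is to reduce the inner-product statement to the norm statement of Lemma \ref{lem:normjl} via the polarization identity, then take a union bound over two events. Since $f$ is linear (it is a matrix multiplication), we have $f(u+v)=f(u)+f(v)$ and $f(u-v)=f(u)-f(v)$, which lets us transfer norm-preservation of $u\pm v$ to inner-product preservation of $u$ and $v$.

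First I would dispose of the degenerate case $u=0$ or $v=0$ (where the claim is trivial) and, using linearity of $f$, normalize so that $\|u\|=\|v\|=1$. Indeed, writing $\tilde u = u/\|u\|$ and $\tilde v = v/\|v\|$, one checks that
\begin{equation*}
|u\t v - f(u)\t f(v)| \;=\; \|u\|\,\|v\|\,\bigl|\tilde u\t \tilde v - f(\tilde u)\t f(\tilde v)\bigr|,
\end{equation*}
so a bound of $\epsilon$ on the normalized quantity immediately yields the desired bound $\epsilon\|u\|\|v\|$ on the unnormalized one.

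Next I would apply Lemma \ref{lem:normjl} to each of the two fixed vectors $u+v$ and $u-v$. Each individual bound fails with probability at most $2e^{-(\epsilon^2/2-\epsilon^3/3)k/2}$, so by a union bound both norm-preservation events hold simultaneously with probability at least $1-4e^{-(\epsilon^2/2-\epsilon^3/3)k/2}$. On this good event, the polarization identity
\begin{equation*}
u\t v \;=\; \tfrac{1}{4}\bigl(\|u+v\|^2-\|u-v\|^2\bigr), \qquad f(u)\t f(v) \;=\; \tfrac{1}{4}\bigl(\|f(u+v)\|^2-\|f(u-v)\|^2\bigr),
\end{equation*}
together with the triangle inequality, yields
\begin{equation*}
\bigl|u\t v - f(u)\t f(v)\bigr| \;\le\; \tfrac{\epsilon}{4}\bigl(\|u+v\|^2+\|u-v\|^2\bigr).
\end{equation*}
Finally the parallelogram law gives $\|u+v\|^2+\|u-v\|^2 = 2(\|u\|^2+\|v\|^2) = 4$ under the normalization, so the right-hand side is exactly $\epsilon = \epsilon\|u\|\|v\|$, completing the argument.

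There is no serious obstacle; the only thing to be slightly careful about is the accounting of the failure probability, since one might be tempted to also invoke Lemma \ref{lem:normjl} on $u$ and $v$ themselves (which would produce an unwanted factor of $8$ instead of $4$). Sticking with the two-term polarization identity $\tfrac{1}{4}(\|u+v\|^2 - \|u-v\|^2)$, rather than the three-term form $\tfrac{1}{2}(\|u+v\|^2 - \|u\|^2 - \|v\|^2)$, is what produces the stated constant $4$ in the probability bound.
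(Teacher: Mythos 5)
Your proposal is correct and follows essentially the same route as the paper's own proof: normalize $u,v$ to unit vectors, apply Lemma~\ref{lem:normjl} to $u'+v'$ and $u'-v'$, take a union bound over the two norm-preservation events to get the factor $4$, and use the polarization identity to convert norm distortion into inner-product distortion. The only cosmetic difference is that you package the two one-sided estimates via the triangle inequality and the parallelogram law, whereas the paper writes out the upper and lower bounds on $f(u')\t f(v')$ separately; the content is identical.
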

\begin{proof}
Consider two vectors $u' + v'$ and $u' - v'$, such that $u' = \frac{u}{\|u\|}$ and $v' = \frac{v}{\|v\|}$. We try to preserve the norms of both these vectors {\it simultaneously}. Let $A$  and $B$ denote the events 
\begin{align*}
A&:=  \{(1-\epsilon)\|u'+v'\|^2 \le \|f(u'+v')\|^2  \le (1+\epsilon)\|u'+v'\|^2\}  \\
B&:=  \{(1-\epsilon)\|u'-v'\|^2 \le \|f(u'-v')\|^2 \le (1+\epsilon)\|u'-v'\|^2\} 
\end{align*} 
We need to find a lower bound on $P(A \cap B)$, which is equivalent to finding an upper bound on $P(A^c \cup B^c)$. We have,
$P(A \cap B) =  1 - P(A^c \cup B^c) \geq  1 - P(A^c) - P(B^c),$
where the last inequality follows from the union bound on $ P(A^c \cup B^c) $. 
From Lemma \ref{lem:normjl}, we have  P($A$) $\geq 1- 2e^{-(\epsilon^2 - \epsilon^3)k/4}$ and P($B)\geq 1- 2e^{-(\epsilon^2 - \epsilon^3)k/4}$. 
Consequently, 
\begin{equation}
P(A \cap B) \geq 1 -  4e^{-(\frac{\epsilon^2}{2} - \frac{\epsilon^3}{3})k/2}. \label{eq:pacapb}
\end{equation}

Now since,
$ 4f(u') \t f(v') = \|f(u'+v')\|^2 - \|f(u'-v')\|^2$
we have from the definition of $ A,B $, that under the event $ A \cap B, $
\begin{align*}
4f(u') \t f(v') &\geq (1-\epsilon)\|u'+v'\|^2 - (1+\epsilon)\|u'-v'\|^2 \\ &= 4(u')\t v' - 2 \epsilon (\|u'\|^2 + \|v'\|^2).
\end{align*}
Since $u'$ and $v'$ are unit vectors, we get
$f(u') \t f(v') \ge (u' )\t v' - \epsilon.$
Similarly,  under the event $ A \cap B, $
\begin{align*}
4f(u') \t f(v')
 &\leq  4(u') \t v' + 2 \epsilon (\|u'\|^2 + \|v'\|^2),
\end{align*}
whereby,
$f(u') \t f(v') \le (u') \t v' + \epsilon.$
Since $ f $ is linear, $$\{|u \t v - f(u) \t f(v)| \le \epsilon \|u\| \|v\|\} \supseteq A \cap B.$$ Now using \eqref{eq:pacapb}, we get the result.
\end{proof}

The above lemma talks about preserving only one inner product. The following lemma generalizes the result and ensures the preservation of a finitely many inner products simultaneously.
\begin{lemma} \label{lem:innerpresmult}
For each  $ i = 1,\hdots, m $, let $ u_i,v_i $ be vectors $ \Real^n $. Also, let $R \in \mathbbm{R}^{n \times k}$ be a random matrix as constructed in Section \ref{sec:constR}, and define a mapping $f(x) = \sqrt{\frac{n}{k}}R\t x$ for $x \in \mathbbm{R}^n$. Then for $0< \epsilon <1$,
\begin{equation*}
P\left(|u_i\t v_i - f(u_i)\t f(v_i)| \le \epsilon \|u_i\| \|v_i\| \quad \forall  i=1,\hdots,m\right)  \geq1 -  4me^{-(\frac{\epsilon^2}{2} - \frac{\epsilon^3}{3})k/2} 
\end{equation*}
\end{lemma}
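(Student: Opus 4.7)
The plan is to reduce this statement to the single-pair result of Lemma~\ref{lem:innerjl} via a union bound, since the same random matrix $R$ is used to define $f$ across all $m$ pairs. For each $i = 1,\dots,m$ I would introduce the event
\[
E_i := \bigl\{\,|u_i\t v_i - f(u_i)\t f(v_i)| \le \epsilon \|u_i\|\|v_i\|\,\bigr\}.
\]
Applying Lemma~\ref{lem:innerjl} pairwise to $(u_i,v_i)$ yields
\[
P(E_i^c) \;\le\; 4 e^{-(\epsilon^2/2 - \epsilon^3/3)k/2},
\]
and crucially this bound is \emph{uniform in $i$}, because the right-hand side depends only on $\epsilon$ and $k$, not on the particular vectors $u_i,v_i$ (their norms are absorbed into the statement of the event itself).

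Next I would write the event of interest as the intersection $\bigcap_{i=1}^m E_i$ and apply Boole's inequality:
\[
P\Bigl(\bigcap_{i=1}^m E_i\Bigr) \;=\; 1 - P\Bigl(\bigcup_{i=1}^m E_i^c\Bigr) \;\ge\; 1 - \sum_{i=1}^m P(E_i^c) \;\ge\; 1 - 4m\,e^{-(\epsilon^2/2 - \epsilon^3/3)k/2},
\]
which is exactly the claimed bound.

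There is no real obstacle: the events $E_i$ are not independent (they share the same $R$), but the union bound requires no independence, and the only ingredient beyond Lemma~\ref{lem:innerjl} is the observation that its failure probability is free of $u_i,v_i$. Thus the entire proof amounts to invoking Lemma~\ref{lem:innerjl} $m$ times and summing the failure probabilities. The mild price paid is the factor of $m$ in the deviation bound, which is what forces $k$ to scale like $O(\epsilon^{-2}\ln m)$ in order to keep the overall failure probability small — a scaling that will drive the choice of lower dimension in the subsequent algorithm.
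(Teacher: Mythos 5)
Your proposal matches the paper's proof: the paper likewise defines the events $A_i$, applies Lemma~\ref{lem:innerjl} to each pair to bound $P(A_i^c)$, and concludes via the union bound on $\bigcup_i A_i^c$. The argument is correct and essentially identical.
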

\begin{proof}
Let $A_i$ denote the event $ \{|u_i\t v_i - f(u_i)\t f(v_i)| \le \epsilon \|u_i\| \|v_i\|\} $ for $ i=1,\hdots,m. $
 Thus, from Lemma~2.4 we get for all $ i=1,\hdots,m, $
\begin{align*}
P(A_i^c) &\leq  4e^{-(\frac{\epsilon^2}{2} - \frac{\epsilon^3}{3})k/2}, 
\end{align*}
Similar to the analysis in the proof of Lemma~\ref{lem:innerjl}, by the union bound, 
\begin{align*}
P(A_1 \cap A_2 \cdots \cap A_m) &= 1 - P(A_{1}^c \cup A_{2}^c \cdots \cup A_{m}^c), \\
&\geq1 - [P(A_{1}^c) + P(A_{2}^c) + \cdots P(A_{m}^c)], \\
&\geq1 - 4me^{-(\epsilon^2/2 - \epsilon^3/3)k/2},
\end{align*}
as required.
\end{proof}

This completes the preliminaries pertaining to random projections. In the following section we present our algorithm.

\section{Algorithm} \label{sec:algorithm}
Algorithm A below is our dimensionality reduction algorithm for solving a typically high-dimensional affine variational inequality with a compact feasible region. It constructs a lower-dimensional AVI and using the solution obtained to this lower dimensional problem, generates an approximate solution to our original problem. Our algorithm probabilistically guarantees that the solution vector it generates, solves the original problem approximately.

\subsection*{Algorithm A}
\begin{enumerate}
\item \textbf{Input:} $ \AVI(K,M,q) $, \ie, $ M \in \Real^{n\times n}, q \in \Real^n, K=\{x | Ax \leq b\} $ where $ A \in \Real^{m \times n},b\in \Real^m. $
\item Fix the error parameter $\epsilon \in (0,1)$ and the success confidence parameter $\delta \in (0,1]$. Pick the lower dimension value $k$ such that $k \ge \frac{2\ln(\frac{4\eta}{\delta})}{(\frac{\epsilon^2}{2} - \frac{\epsilon^3}{3})}$, where $\eta = |\ext(K)|$ and $\ext(K)$ denotes the set of all extreme points of $K$.  
\item Construct an $n \times k$ dimensional random matrix $R$ as described in Section~\ref{sec:constR}. 
\item Construct the corresponding lower-dimensional \AVI($\widetilde{K}, \widetilde{M}, \widetilde{q}$). Define $\widetilde{q} = \sqrt{\frac{n}{k}}R\t q$ and $\widetilde{M} = R\t MR$, and $\widetilde{K}$ to be the projection of the polytope $K$, i.e., $\widetilde{K} = \{\widetilde{x} \in \mathbbm{R}^k | \widetilde{x} = \sqrt{\frac{n}{k}}R\t x, x \in K\}$. \label{step:proj}
\item Solve \AVI($\widetilde{K}, \widetilde{M}, \widetilde{q}$) to obtain a lower-dimensional solution $\widetilde{x}$. \label{step:detpre}
\item Solve the following $\ell_1$ norm minimization problem (can be converted into a linear program) to obtain a  vector $x^*$: \label{step:l1}
\[ x^* \in \arg \min_{x\in \Real^n} \|x\|_{1} \quad
\st \quad  \sqrt{\frac{n}{k}}R\t x = \widetilde{x}
 \]
\item Project $x^*$ on $K$ to obtain the final random approximate solution $x^{\#}$, i.e., \label{step:projk}
\begin{equation*}
x^{\#} = \Pi_K(x^*)\triangleq\underset{x \in K }{\text{arg min }} \|x^* - x\|^2
\end{equation*}
\item \textbf{Output:} $x^{\#}$.
\end{enumerate}
\noindent Our main result is as follows.

\begin{theorem} \label{thm:main} Let $\epsilon \in (0,1), \delta \in (0,1]$. Let $ \AVI(K,M,q) $ be an AVI where $ K $ is a compact polyhedron and consider Algorithm A. Suppose $\widetilde x$ is the lower-dimensional solution obtained in Step \ref{step:detpre} and let $x_o \in K$ be a deterministic preimage of $ \widetilde{x}$. Then the following claims hold.
\begin{enumerate}
\item  If the lower dimension $k$ satisfies  $k \ge \frac{2\ln(\frac{4\eta}{\delta})}{(\frac{\epsilon^2}{2} - \frac{\epsilon^3}{3})}$, then with  probability strictly greater than $(1-\delta)$,
$x_o$ solves $ \AVI(K,M,q) $ approximately, i.e., 
\begin{equation*}
(y - x_o)\t (q + Mx_o) \ge  \hat{\epsilon}, \quad \forall y \in K,
\end{equation*}
where $\hat{\epsilon} = - \frac{\left(\frac{\epsilon^2}{2} - \frac{\epsilon^3}{3}\right)n }{2\ln(\frac{4\eta}{\delta})} \|M\| B  - \epsilon \cdot  D \|q\| - \epsilon \cdot  D  \|M\| B$,
where $ \norm{M} $ denotes the $ \ell_2 $ induced norm of $ M $, $ D := \max_{x_1,x_2\in K} \norm{x_1-x_2} $ is the diameter of $ K $ and $ B :=\max_{x \in K} \|x\|$. 
\item Let $x^{\#}$ be the output generated by the algorithm A in Step 8, then with probability at least $1 - O(n^{-1/\alpha})$:
\begin{equation*}
\|x_o - x^{\#}\|_2 \le C B' \cdot (k/\ln(n))^{-1/2},
\end{equation*}
where $\alpha >0$ is a sufficiently small number (less than an absolute constant) and $C$ is a constant depending only on $\alpha$ and $B'=\max_{x \in K}\norm{x}_1 $.
\end{enumerate}
\end{theorem}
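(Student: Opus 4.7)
My plan is to handle the two parts of the theorem separately, with part (1) as the substantive one. I start from the fact that $\widetilde x$ solves $\AVI(\widetilde K, \widetilde M, \widetilde q)$, yielding $(\widetilde y - \widetilde x)\t(\widetilde M \widetilde x + \widetilde q) \ge 0$ for all $\widetilde y \in \widetilde K$. For any $y \in K$, the projection $f(y) := \sqrt{n/k}\,R\t y$ lies in $\widetilde K$ by definition, and by hypothesis $\widetilde x = f(x_o)$. The key algebraic observation is that $\widetilde M f(x_o) + \widetilde q$ equals $f(M R R\t x_o + q)$, since applying $\widetilde M = R\t M R$ to $\sqrt{n/k}\,R\t x_o$ produces $\sqrt{n/k}\,R\t (M R R\t x_o)$. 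Hence the lower-dimensional variational inequality rewrites as $f(y - x_o)\t f(M R R\t x_o + q) \ge 0$.

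Because the map $y \mapsto (y - x_o)\t(M x_o + q)$ is affine on $K$, it suffices to establish the approximate AVI inequality at each extreme point $e_1, \ldots, e_\eta$ and then extend by convex combinations. I would apply Lemma~\ref{lem:innerpresmult} to the $\eta$ pairs $(e_i - x_o,\, M R R\t x_o + q)$; the choice of $k$ in step 2 of Algorithm~A is precisely calibrated so that the union-bound probability strictly exceeds $1 - \delta$. Combining the preservation estimate with $f(e_i - x_o)\t f(M R R\t x_o + q) \ge 0$ yields $(e_i - x_o)\t(M R R\t x_o + q) \ge -\epsilon \|e_i - x_o\|\,\|M R R\t x_o + q\| \ge -\epsilon D(\|M\|B + \|q\|)$, which accounts for the last two terms of $\hat\epsilon$. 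The main obstacle is converting this into a bound on the true residual $(e_i - x_o)\t(M x_o + q)$. Writing $(e_i - x_o)\t(M x_o + q) = (e_i - x_o)\t(M R R\t x_o + q) + (e_i - x_o)\t M (I - R R\t) x_o$, I would bound the trailing term by $D\|M\|\,\|(I - R R\t)x_o\|$ and then control $\|(I - R R\t) x_o\|^2 = \|x_o\|^2 - (k/n)\|f(x_o)\|^2$ via Lemma~\ref{lem:normjl} applied to $x_o$. The parameter choice for $k$ then lets me express the resulting bound in the form $-\frac{(\epsilon^2/2 - \epsilon^3/3)n}{2\ln(4\eta/\delta)}\|M\|B$, giving the first term of $\hat\epsilon$; a union bound over the $\eta$ inner-product events and the norm event for $x_o$ preserves the $1-\delta$ confidence.

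For part (2), the strategy is to invoke the Candes--Tao $\ell_1$-recovery theorem from~\cite{candes2006near}. The matrix $R$ constructed in Section~\ref{sec:constR} is, up to normalization, a random orthonormal Gaussian measurement operator and thus enjoys the required restricted isometry property with probability $1 - O(n^{-1/\alpha})$. Since $x_o \in K$ is feasible for the $\ell_1$-minimization in step~\ref{step:l1} and $\|x_o\|_1 \le B'$, the Candes--Tao bound yields $\|x_o - x^*\|_2 \le C B' (k/\ln n)^{-1/2}$ on this event. The final step exploits that $x_o \in K$ and $x^{\#} = \Pi_K(x^*)$: by non-expansiveness of the Euclidean projection onto the convex set $K$, $\|x_o - x^{\#}\|_2 \le \|x_o - x^*\|_2$, which completes the proof.
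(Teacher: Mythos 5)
Your proposal is correct and reaches the theorem's conclusion, but it handles the crucial correction term --- the discrepancy between $M$ and $M' = MRR^\top$ --- by a genuinely different route from the paper. The paper (Lemmas~\ref{lem:bound1} and~\ref{lem:bound2}) keeps the correction \emph{inside} the projected space: it adds $f(x_e-x_o)^\top f\bigl((M-M')x_o\bigr)$ to both sides of the projected inequality, applies Lemma~\ref{lem:innerpresmult} to the pairs $(x_e-x_o,\,q+Mx_o)$, and then bounds the leftover term $\frac{n}{k}(x_e-x_o)^\top RR^\top(M-M')x_o$ \emph{deterministically} via the SVD of $R$ and the identities $\|RR^\top\|=\|I-RR^\top\|=1$, which yields $-\frac{n}{k}D\|M\|B$ at no extra probabilistic cost. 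You instead apply the inner-product preservation to the pairs $(e_i-x_o,\,q+MRR^\top x_o)$ and move the correction $(e_i-x_o)^\top M(I-RR^\top)x_o$ into the \emph{original} space, controlling $\|(I-RR^\top)x_o\|$ probabilistically through Lemma~\ref{lem:normjl}. Your resulting bound $\|x_o\|\sqrt{1-\tfrac{k}{n}(1-\epsilon)}\le B\le \tfrac{n}{k}B$ is actually tighter than the paper's, but it costs one additional event in the union bound: with $\eta+1$ events the failure probability is only bounded by $(4\eta+2)e^{-(\epsilon^2/2-\epsilon^3/3)k/2}\le\delta\bigl(1+\tfrac{1}{2\eta}\bigr)$, so the stated threshold $k \ge \frac{2\ln(4\eta/\delta)}{\epsilon^2/2-\epsilon^3/3}$ is no longer exactly calibrated to give confidence $1-\delta$. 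This is trivially repaired: either inflate the constant inside the logarithm, or note that $I-RR^\top$ is an orthogonal projector so $\|(I-RR^\top)x_o\|\le\|x_o\|\le B$ holds surely, dispensing with Lemma~\ref{lem:normjl} and recovering the paper's probability count (and, incidentally, both your bound and the paper's Lemma~\ref{lem:bound2} carry a factor $D$ in this term that the theorem's displayed $\hat\epsilon$ omits --- an inconsistency in the paper, not in your argument). The extension from extreme points to all of $K$ by convex combinations, and Part~(2) via Theorem~\ref{thm:taorecov} with $f=x_o$, $f^{\#}=x^*$ followed by non-expansiveness of $\Pi_K$, coincide with the paper's proof.
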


There are two parts to the above result. The first part shows that the deterministic preimage $ x_o $ of the lower-dimensional solution $ \widetilde{x} $ \textit{approximately} solves the given problem $ \AVI(K,M,q) $ in the sense that the inner product $ (y-x_o)\t(q+Mx_o) $ as $ y $ ranges over $ K $ is at least $ \hat{\epsilon}. $ Here $ \hat{\epsilon} $ is a negative number  that can be made arbitrarily close to zero  by choosing $ \epsilon $ to be small enough; for an exact solution to the AVI we would require $ \hat{\epsilon}\geq 0 $. Notice though that this result in itself is not computationally useful since $ x_o $
cannot be computed from a single sample of $ \widetilde{x} $. Each run of the algorithm produces a particular sample path: any process of ``inverting'' the relation $ \sqrt{\frac{n}{k}}R\t x_o = \widetilde{x} $ would in general yield an $ x_o $ that is sample-path  dependent (and hence $ R $-dependent). Steps 6-8 of the algorithm produce a random approximation $ x^\# $ to (the deterministic approximate solution) $ x_o $. The second part of the above theorem establishes that with high probability, these steps produce a good approximation.

In the following section, we prove Theorem \ref{thm:main} and establish the correctness of the algorithm.

\section{Correctness of the Algorithm} \label{sec:correctness}
There are two parts that need to be established to show the correctness of the algorithm. The first part showing  the point $ x_o $ is an approximate solution will be proved using the JL lemma.  The second part, showing that the recovered solution $ x^\# $ approximates $ x_o $ will be showed using a result of Candes and Tao~\cite{candes2006near}. Before we proceed with this analysis, we note that the lower-dimensional problem is indeed an AVI and that it admits a solution.

The lower dimensional problem is a variational inequality, $ \VI(\widetilde{K},\widetilde{F}) $ where the mapping $\widetilde{F}(\widetilde x) \equiv \widetilde q + \widetilde M \widetilde x$ is affine. $\widetilde K$ is a the projection of the polytope $ K $. 
To show that this VI is an AVI, it suffices to show that $ \widetilde{K} $ is also a polytope. For any set $ S $ we denote the set of its extreme points by $ \ext(S). $
\begin{lemma} \label{lem:polytope}
Let $K \subseteq \Real^n, M \in \Real^{n\times n}, q \in \Real^n$ be a polytope and  suppose $ \AVI(K,M,q) $ is provided as an input to Algorithm A. Then the following are true,
\begin{enumerate}
\item The set $ \widetilde{K} $ generated in Step \ref{step:proj} of Algorithm A is also a polytope.
\item $ \AVI(\widetilde{K},\widetilde{M},\widetilde{q}) $ admits a solution. 
\end{enumerate}
\end{lemma}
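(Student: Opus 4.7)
The plan is to handle the two claims independently, both of which follow from elementary facts about linear maps and standard existence theory for variational inequalities.

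For the first claim, I would observe that $\widetilde{K}$ is by definition the image of the polytope $K$ under the linear map $\varphi : x \mapsto \sqrt{n/k}\, R^\top x$, and that linear images of polytopes are polytopes. Concretely, since $K$ is a polytope it equals the convex hull of its finite set of extreme points $\ext(K) = \{v_1, \ldots, v_\eta\}$. Because $\varphi$ is linear, $\widetilde{K} = \varphi(K) = \mathrm{conv}\{\varphi(v_1), \ldots, \varphi(v_\eta)\}$, which is the convex hull of a finite collection of points in $\Real^k$ and hence a polytope (the extreme points of $\widetilde{K}$ form a subset of $\{\varphi(v_1), \ldots, \varphi(v_\eta)\}$). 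If a more explicit constraint-form description were desired, one could alternatively apply a Fourier-Motzkin style elimination to the system $\{Ax \leq b, \ \widetilde{x} = \sqrt{n/k}\,R^\top x\}$ to eliminate $x$ and obtain an inequality description of $\widetilde{K}$, but the extreme-point argument is cleaner.

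For the second claim, I would invoke the standard existence theorem for variational inequalities with continuous mappings on compact convex sets (the Hartman--Stampacchia theorem, based on Brouwer's fixed-point theorem, see~\cite{facchinei03finiteI}). The set $\widetilde{K}$ is convex and compact, since it is the image of the compact convex set $K$ under the continuous linear map $\varphi$. The mapping $\widetilde{F}(\widetilde{x}) = \widetilde{q} + \widetilde{M}\widetilde{x}$ is affine and therefore continuous on $\widetilde{K}$. By the cited existence theorem, $\VI(\widetilde{K}, \widetilde{F})$ admits a solution, which establishes the claim.

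I do not expect any real obstacle here: both parts are essentially book-keeping. The only subtlety worth flagging is that one should confirm $\widetilde{K}$ is nonempty, which is immediate since $K$ is nonempty (being the feasible region of a well-posed AVI), so that existence theorems apply without vacuity. No randomness enters the argument; the matrix $R$ is fixed on each sample path, and the conclusions hold for every realization.
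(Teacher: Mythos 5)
Your proposal is correct and follows essentially the same route as the paper: part 1 is proved by writing any point of $\widetilde{K}$ as a convex combination of the finitely many projected extreme points of $K$ (so $\widetilde{K}$ is the convex hull of a finite set, hence a polytope), and part 2 follows by invoking the standard existence theorem for VIs with continuous maps over compact convex sets. The remarks on nonemptiness and on the alternative Fourier--Motzkin description are harmless additions but not needed.
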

\begin{proof} To show part 1, it suffices to show that there is a finite set of points in $ \Ktilde $ such that every point in $\widetilde K$ can be expressed as a convex combination of these points. To this end let $\ext(K)$ denote the set of extreme points of $K$. Since $ K $ is a polytope, $ \eta:=|\ext(K)|<\infty. $ Then from Step \ref{step:proj} of Algorithm A, we have, $\forall \widetilde x \in \widetilde K$, $\exists x \in K$ such that $\widetilde x =  R\t x$. Since $K$ itself is a polytope, $\exists \alpha_1,\hdots,\alpha_\eta \ge 0,$ $\sum_i \alpha_i = 1$ such that $x = \sum_{i} \alpha_i x_i$, where  $x_1,\hdots,x_\eta$ are the extreme points of $ K $. This implies, $\widetilde x = \sum_i \alpha_i \sqrt{\frac{n}{k}}R\t  x_i$. Thus, any vector $\widetilde x \in \widetilde K$ can be expressed as a convex combination of the points in the set $S = \left\{\sqrt{\frac{n}{k}}R\t x\ |\ x \in \ext(K)\right\}$.  Since $ S $ is finite, $ \Ktilde $ is a polytope. Consequently, $ \Ktilde $ is also compact. Standard results~\cite{facchinei03finiteI} now show that $ \AVI(\Ktilde,\Mtilde,\qtilde) $ admits a solution.
\end{proof}

\subsection{Problem reformulation}
For a point to be a solution of  a VI, by definition, \textit{(uncountably) infinitely many} inequalities must simultaneously hold. However for an AVI on a polytope, a reduction to a finite set of inequalities is possible. This important reformulation is formalized in the following lemma.

\begin{lemma} \label{lem:reform}
Let $K \subseteq \Real^n$ be a polytope and  
$q \in \mathbbm{R}^n$, $M \in \mathbbm{R}^{n \times n}$. A vector $x^* \in K$ is a solution of $\AVI(K, M,q)$ if and only if:
\begin{equation}
(x_e-x^*)\t (q+Mx^*) \ge 0, \qquad \forall x_e  \in \ext(K), \label{eq:reform}
\end{equation}•
where $\ext(K)$ denotes the set of all the extreme points of $K$.
\end{lemma}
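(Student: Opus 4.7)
My plan is to prove both directions separately, observing that the map $y \mapsto (y-x^*)^\top(q+Mx^*)$ is affine in $y$, which is the real engine behind the reduction.

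For the forward direction ($\Rightarrow$), I would simply note that if $x^*$ solves $\AVI(K,M,q)$, then by definition $(y-x^*)^\top(q+Mx^*) \geq 0$ for every $y \in K$. Since every extreme point $x_e$ of $K$ lies in $K$, inequality \eqref{eq:reform} follows immediately by specializing $y = x_e$.

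For the reverse direction ($\Leftarrow$), assume \eqref{eq:reform} holds for every $x_e \in \ext(K)$. I would fix an arbitrary $y \in K$ and appeal to the fact that a polytope is the convex hull of its (finitely many) extreme points (as invoked in Lemma~\ref{lem:polytope}). Thus there exist $\alpha_1,\dots,\alpha_\eta \geq 0$ with $\sum_i \alpha_i = 1$ and extreme points $x_1,\dots,x_\eta \in \ext(K)$ such that $y = \sum_i \alpha_i x_i$. Substituting and using linearity of the inner product in $y$ yields
\begin{align*}
(y-x^*)^\top(q+Mx^*) &= \Bigl(\sum_i \alpha_i x_i - x^*\Bigr)^\top (q+Mx^*) \\
&= \sum_i \alpha_i (x_i - x^*)^\top(q+Mx^*) \geq 0,
\end{align*}
where the last inequality is a nonnegative combination of the hypotheses \eqref{eq:reform}. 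Since $y$ was arbitrary, $x^*$ solves $\AVI(K,M,q)$.

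There is no real obstacle here; the proof is essentially a one-line consequence of the affine dependence of $F(x^*)^\top(y-x^*)$ on $y$, combined with the Krein–Milman-type fact that every point of a polytope is a convex combination of extreme points. The only care needed is to make sure we use that $K$ is a \emph{polytope} (so $\ext(K)$ is finite and nonempty and spans $K$ via convex combinations), which is already part of the hypothesis.
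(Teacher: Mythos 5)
Your proof is correct and follows essentially the same route as the paper's: specialize $y$ to extreme points for the forward direction, and for the converse write an arbitrary $y \in K$ as a convex combination of extreme points and use linearity (with $\sum_i \alpha_i = 1$) to combine the finitely many inequalities. No gaps.
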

\begin{proof}
``$ \implies $'' 
Let $x^* \in K$ solve $\AVI(K, q, M)$. Thus,
\begin{equation*}
(y-x^*)\t (q+Mx^*) \ge 0, \quad \forall y \hspace{2pt} \in K.
\end{equation*}
Put $y =x_e$ where $ x_e \in \ext(K). $Thus,
\begin{equation*}
(x_e-x^*)\t (q+Mx^*) \ge 0, \quad \forall x_e \hspace{2pt} \in \ext(K).
\end{equation*}
Hence, $x^*$ satisfies \eqref{eq:reform}. 

``$ \impliedby $'' Let $ \ext(K) = x_1,\hdots,x_\eta $ where $ \eta = |\ext(K)|. $
Consider an arbitrary vector $y \in K$. Since $ K $ is a polytope, there exists $\{ \alpha_i \}^{\eta}_{i =1}$, such that $\sum_{i=1}^{\eta} \alpha_i = 1$, $\alpha_i \geq 0, \forall i \in 1,\hdots,\eta$, and $y = \sum_{i=1}^{\eta} {\alpha_i}x_i.$
Since $x^*$ solves \eqref{eq:reform}, we get  for all $i $, 
\begin{align*}
\alpha_i(x_i - x^*)\t (q+Mx^*) &\ge 0. 
\end{align*}
Summing all the inequalities over $ i=1,\hdots,\eta $ and using that $\sum_{i=1}^{\eta} \alpha_i = 1$, we get that
\begin{equation*}
(y-x^*)\t (q+Mx) \ge 0.
\end{equation*}
Since $y \in K$ was an arbitrary vector, the above inequality is true for every $y \in K$. Hence, $x^*$ also solves $\AVI(K, M,q)$. 
\end{proof}

\subsection{Error analysis}
The lower dimensional problem (AVI$(\widetilde K,\widetilde M,\widetilde q)$) as constructed in the algorithm is the following. 
$$ \viproblem{AVI$(\Ktilde,\Mtilde,\qtilde)$}{\ Find $ \xtilde \in \Ktilde $ such that}{(y-\xtilde)\t(\Mtilde \xtilde+\qtilde)\geq 0\ \qquad\forall y \in \Ktilde.} $$
We have the following claim about the deterministic preimage of the solution of the problem $ \AVI(\Ktilde,\Mtilde,\qtilde). $

\begin{lemma} \label{lem:bound1}
Let $ \epsilon \in (0,1) $ and suppose $ \AVI(K,M,q) $ is provided as input to Algorithm A. 
Let $\widetilde x$ be the solution of $\AVI(\widetilde K, \widetilde M, \widetilde q)$ generated by Step \ref{step:detpre} of Algorithm A and  let $x_o$ be its  deterministic preimage. Then with  probability greater than $1 -  4\eta e^{-(\frac{\epsilon^2}{2} - \frac{\epsilon^3}{3})k/2}$, $ x_o $ satisfies 
\begin{equation*}
(x_e - x_o)\t (q + Mx_o) \ge \frac{n}{k}(x_e - x_o)\t  RR\t(M-M')x_o  
-\epsilon \cdot \|x_e - x_o\| \|q + Mx_o\|
\end{equation*}
for every $x_e \in \ext(K)$,
 where $M' = MRR\t$ and $\eta = |\ext(K)|$.
\end{lemma}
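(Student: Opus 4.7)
The plan is to push the lower-dimensional VI inequality back through the projection map $f(u) := \sqrt{n/k}\,R^\top u$, and then to use the inner-product-preservation Lemma~\ref{lem:innerpresmult} to convert each $f$-inner-product back into an inequality on $\Real^n$. The key algebraic identity I will use repeatedly is $f(u)^\top f(v) = \tfrac{n}{k} u^\top RR^\top v$, which is immediate from the definition of $f$.

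\textbf{Step 1 (translate the lower-dimensional VI).} For any $x_e \in \ext(K)$ the projected point $f(x_e)$ lies in $\widetilde K$ by Step~\ref{step:proj} of the algorithm. Since $\widetilde x$ solves $\AVI(\widetilde K,\widetilde M,\widetilde q)$, I substitute $\widetilde y = f(x_e)$ and $\widetilde x = f(x_o)$ to obtain
\[ (f(x_e) - f(x_o))^\top\bigl(\widetilde M f(x_o) + \widetilde q\bigr) \geq 0. \]
A direct computation gives $\widetilde M f(x_o) = R^\top M R \bigl(\sqrt{n/k}\,R^\top x_o\bigr) = f(MRR^\top x_o) = f(M' x_o)$, where $M' = MRR^\top$. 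Hence, by linearity of $f$,
\[ f(x_e - x_o)^\top f(M' x_o + q) \geq 0 \qquad \forall\, x_e \in \ext(K). \]

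\textbf{Step 2 (peel off the deterministic correction).} Split $Mx_o = M'x_o + (M-M')x_o$, so that $f(M' x_o + q) = f(Mx_o + q) - f((M-M')x_o)$. The second summand is handled \emph{exactly} via the identity $f(u)^\top f(v) = \tfrac{n}{k} u^\top RR^\top v$; combining this with Step~1 produces the deterministic bound
\[ f(x_e - x_o)^\top f(Mx_o + q) \;\geq\; \tfrac{n}{k}(x_e - x_o)^\top RR^\top (M - M') x_o. \]

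\textbf{Step 3 (transfer back to $\Real^n$ via JL).} To convert $f$-inner products into ambient inner products I invoke Lemma~\ref{lem:innerpresmult} with $m = \eta$ and with the pairs $(u_i, v_i)$ chosen to be $(x_e - x_o,\; Mx_o + q)$ as $x_e$ ranges over $\ext(K)$. This yields, with probability at least $1 - 4\eta\, e^{-(\epsilon^2/2 - \epsilon^3/3)k/2}$, simultaneously for every extreme point $x_e$:
\[ (x_e - x_o)^\top(Mx_o + q) \;\geq\; f(x_e - x_o)^\top f(Mx_o + q) \;-\; \epsilon\,\|x_e - x_o\|\,\|Mx_o + q\|. \]
Chaining this with Step~2 produces precisely the claimed inequality.

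The one conceptual hurdle is recognising in Step~1 that the lower-dimensional product $\widetilde M\widetilde x$ corresponds to $f(M' x_o)$ rather than to $f(Mx_o)$, because $R$ acts on \emph{both} sides of $M$ in $\widetilde M = R^\top M R$. That mismatch is exactly what generates the deterministic correction $\tfrac{n}{k}(x_e - x_o)^\top RR^\top(M-M')x_o$ in the final bound; once this is clear, the remainder of the proof is a mechanical application of the inner-product JL lemma together with the union bound built into Lemma~\ref{lem:innerpresmult}.
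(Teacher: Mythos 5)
Your proposal is correct and follows essentially the same route as the paper's own proof: translate the lower-dimensional VI inequality at the projected extreme points, add/subtract the $f((M-M')x_o)$ term to isolate the exact correction $\tfrac{n}{k}(x_e-x_o)^\top RR^\top(M-M')x_o$, and then apply Lemma~\ref{lem:innerpresmult} with the $\eta$ pairs $(x_e-x_o,\,q+Mx_o)$ to pass back to ambient inner products. Your Step~1 is in fact slightly more explicit than the paper about why $\widetilde M\widetilde x$ corresponds to $f(M'x_o)$ rather than $f(Mx_o)$, which is the point the paper states without comment.
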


\begin{proof}
Let $\widetilde x$ be a solution to  $\AVI(\widetilde K, \widetilde M, \widetilde q)$ and $x_o$ be its deterministic preimage. This implies,
\[(\widetilde y-\widetilde x)\t (\widetilde q + \widetilde M \widetilde x) \ge 0, \forall \widetilde y \in \widetilde K.  \]
Let $M' = MRR\t$, then it follows from definition of $ K $ that,
\[\sqrt{\frac{n}{k}}(R\t (y - x_o))\t  \sqrt{\frac{n}{k}}R\t (q+M'x_o) \ge 0, \quad\forall y \in K.  \]
Consequently, for each $ x_e \in \ext(K) $,
\[\sqrt{\frac{n}{k}}(R\t (x_e - x_o))\t \sqrt{\frac{n}{k}}R\t (q+M'x_o) \ge 0. \]
On adding ($\sqrt{\frac{n}{k}}(R\t (x_e - x_o))\t \sqrt{\frac{n}{k}}R\t(M-M')x_o $) on both sides we get that for any $x_e \in \ext(K)$,
\begin{align}
\sqrt{\frac{n}{k}}(R\t (x_e - x_o))\t  \sqrt{\frac{n}{k}}R\t (q+Mx_o) 
 &\ge \sqrt{\frac{n}{k}}(R\t (x_e - x_o))\t  \sqrt{\frac{n}{k}}R\t(M-M')x_o  \label{eq:temp1}\\
&= \frac{n}{k}(x_e - x_o)\t RR\t(M-M')x_o. \label{eq:temp2}
\end{align}
Observe that the left hand side of \eqref{eq:temp1} is the an inner product of the random projection of vectors $(x_e-x_o)  $ and $ q+Mx_o $. Hence by Lemma \ref{lem:innerpresmult},  with probability greater than $1 -  4\eta e^{-(\frac{\epsilon^2}{2} - \frac{\epsilon^3}{3})k/2}$, $ x_o $ satisfies,
\begin{multline*}
(x_e - x_o)\t (q + Mx_o) \\ 
\ge \sqrt{\frac{n}{k}}(R\t (x_e - x_o))\t \sqrt{\frac{n}{k}}R\t (q+Mx_o)  -\epsilon  \|x_e - x_o\| \|q + Mx_o\| ,\quad \forall x_e \in \ext(K). 
\end{multline*}
Now using \eqref{eq:temp2}, we get the result.
\end{proof}

For $\AVI(K,M,q)$ and $ R,x_o,\epsilon $ as in Lemma \ref{lem:bound1} and a point $ x_e \in \ext(K), $ define the quantities, 
\begin{align}
\mu_1(x_e) &:= \frac{n}{k}(x_e - x_o)\t RR\t(M-M')x_o, \label{eq:mu1}\\
\mu_2(x_e) &:= -\epsilon \cdot \|x_e - x_o\|\cdot \|q + Mx_o\|. \label{eq:mu2}
\end{align}
Below we derive bounds on $ \mu_1,\mu_2. $

To this end, recall that any matrix $ A \in \Real^{m\times n} $ admits a unique \textit{pseudoinverse}, which is a matrix $A^{+} \in \Real^{n\times n}$ that satisfies a set of conditions~\cite{laub2005matrix}. Furthermore, if $ A $ has full column rank, then $ A^+=(A\t A)\inv A\t. $ By Lemma \ref{lem:rank} and \ref{lem:subspace} the random matrix $ R $ constructed in Steps {\bf R 1 - R 2} 
has full column rank, whereby $(R\t R)^{-1}R\t =R\t$ is the pseudoinverse of $R$. Furthermore, by the singular value decomposition~\cite{horn90matrix}, since $R $ has rank $ k $, there exist orthogonal matrices $U \in \mathbbm{R}^{n \times n}$ and $V \in \mathbbm{R}^{k \times k}$ (\ie, $U\t U = I, V\t V = I$), such that $$R = U\Sigma V\t,$$
where $\Sigma$ = $\bigl(\begin{smallmatrix} S&0\\ 0&0 \end{smallmatrix} \bigr) \in \mathbbm{R}^{n \times k}$, $S$ = diag($\sigma_1,\dots, \sigma_k$) $\in \mathbbm{R}^{k \times k}$, and $\sigma_1 \geq\cdots \geq\sigma_{k} > 0$. This is because, out of the singular values of $ R $ (diagonal entries of $ \Sigma $), exactly $ k $ (the rank of $ R $) values $\sigma_1,\hdots,\sigma_k  $ must be nonzero~\cite{horn90matrix}. 
Furthermore, the pseudoinverse $R^+=R\t$ of $ R $ is given as~\cite{laub2005matrix},
\begin{equation*}
R^+ = V\Sigma^+ U\t ,
\end{equation*}
where $\Sigma^+$ = $\bigl(\begin{smallmatrix} S^{-1}&0\\ 0&0 \end{smallmatrix} \bigr) \in \mathbbm{R}^{k \times n}$. 

Finally, recall that the $ \ell_2 $-induced norm on a matrix is \textit{unitarily invariant}~\cite[p.\ 346, 357]{horn90matrix}, \ie,  
if $P \in \mathbbm{R}^{m \times m}$ and $Q \in \mathbbm{R}^{m \times m}$ are two orthogonal matrices, i.e., $P\t P = I$ and $Q\t Q = I$ then for any $A \in \mathbbm{R}^{m \times m}$,   
$ \|PAQ\| =  \|A\|. $

\begin{lemma} \label{lem:bound2}
Let  $\AVI(K,M,q)$ and $ R,x_o,\epsilon $ be as in Lemma \ref{lem:bound1}, let $ x_e \in \ext(K) $ and let
$ \mu_1,\mu_2 $ be defined as in \eqref{eq:mu1} and \eqref{eq:mu2}. Then the following bounds hold:
\begin{enumerate}
\item $\mu_1(x_e) \ge - \frac{n}{k}D\cdot  \|M\| \cdot B$
\item $\mu_2(x_e) \ge -\epsilon D \|q\| - \epsilon D \|M\|B$
\end{enumerate}
where $D := \max_{x_1, x_2 \in K}\|x_1-x_2\|$ and $ B:= \max_{x \in K} \norm{x} $.
\end{lemma}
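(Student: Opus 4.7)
The plan is to handle the two bounds separately: the bound on $\mu_2(x_e)$ is a direct consequence of Cauchy--Schwarz, the triangle inequality, and sub-multiplicativity of the $\ell_2$ induced norm; whereas the bound on $\mu_1(x_e)$ requires additionally exploiting that $R$ has orthonormal columns so that $RR\t$ behaves like an orthogonal projection.

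First I would dispose of $\mu_2(x_e) = -\epsilon \|x_e - x_o\| \cdot \|q + Mx_o\|$. Since $x_e \in \ext(K) \subseteq K$ and $x_o \in K$, one has $\|x_e - x_o\| \le D$ by definition of the diameter. Applying the triangle inequality yields $\|q + Mx_o\| \le \|q\| + \|M\|\,\|x_o\| \le \|q\| + \|M\|B$, where the last step uses $\|x_o\| \le B$. Plugging both of these into the definition of $\mu_2(x_e)$ immediately gives $\mu_2(x_e) \ge -\epsilon D \|q\| - \epsilon D \|M\| B$, as required.

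For the bound on $\mu_1(x_e)$, I would first rewrite $M - M' = M - MRR\t = M(I - RR\t)$, so that
\[
\mu_1(x_e) = \tfrac{n}{k}(x_e - x_o)\t RR\t M (I - RR\t) x_o.
\]
Applying Cauchy--Schwarz on the outer pairing and then sub-multiplicativity of $\|\cdot\|$ on the remaining factor gives
\[
|\mu_1(x_e)| \le \tfrac{n}{k}\|x_e - x_o\| \cdot \|RR\t\| \cdot \|M\| \cdot \|I - RR\t\| \cdot \|x_o\|.
\]
The key step is to show $\|RR\t\| = \|I - RR\t\| = 1$. Using the SVD decomposition $R = U\Sigma V\t$ already set up in the paragraph preceding the lemma, the condition $R\t R = I_k$ forces $V\Sigma\t\Sigma V\t = I_k$, so $S = I_k$ and hence $\Sigma\Sigma\t \in \Real^{n \times n}$ is diagonal with exactly $k$ ones and $n-k$ zeros. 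By unitary invariance of the $\ell_2$ induced norm, $\|RR\t\| = \|U\Sigma\Sigma\t U\t\| = \|\Sigma\Sigma\t\| = 1$, and analogously $\|I - RR\t\| = \|I - \Sigma\Sigma\t\| = 1$ (using $n > k$ so that $I - \Sigma\Sigma\t$ has at least one nonzero diagonal entry equal to $1$). Combined with $\|x_e - x_o\| \le D$ and $\|x_o\| \le B$, this yields $|\mu_1(x_e)| \le \tfrac{n}{k} D \|M\| B$, and hence $\mu_1(x_e) \ge -\tfrac{n}{k} D \|M\| B$.

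The only non-routine ingredient is the identification $\|RR\t\| = \|I - RR\t\| = 1$; conceptually this is just the statement that $RR\t$ is the orthogonal projector onto the column space of $R$ (and $I - RR\t$ onto its orthogonal complement), which follows either via the SVD route above or directly from $R\t R = I_k$. Everything else is standard norm bookkeeping.
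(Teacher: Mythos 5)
Your proof is correct and follows essentially the same route as the paper's: Cauchy--Schwarz plus the diameter and norm bounds, with the key step being $\|RR\t\|=\|I-RR\t\|=1$ established via the SVD and unitary invariance of the $\ell_2$ induced norm (the paper phrases this through the pseudoinverse $\Sigma\Sigma^+$, while you note directly that orthonormality of the columns forces $S=I_k$, but these are the same observation). The $\mu_2$ bound is handled identically in both.
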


\begin{proof}
By Cauchy-Schwartz inequality,
\begin{align*}
\mu_1(x_e) &\ge - \frac{n}{k}\|x_e - x_o\| \cdot \|RR\t(M-M')x_o \| \\
&\ge - \frac{n}{k}\cdot D \cdot \norm{M}\cdot \|RR\t\|\cdot \|(I-RR\t)\|\cdot B
\end{align*}
where $ D = \underset{x_1, x_2 \in K}{\max}\|x_1-x_2\| $ and $ B=\max_{x \in K}\norm{x}. $ Let the singular value decomposition of $ R $ be $ R= U\Sigma V\t $, where $ U,\Sigma,V $ are as above. Consequently, 
\begin{align*}
\|RR\t\|=\|U\Sigma V\t V\Sigma^+ U\t \| \buildrel{(a)}\over= \|U\t U\Sigma \Sigma^+ U\t U \|  = \norm{\Sigma\Sigma^+},
\end{align*}
where $ (a) $ follows from noting that $ V\t V=I $ and the unitary invariance of the $ \ell_2 $-induced norm. Similarly,
\begin{align*}
\|(I-RR\t)\|= \|I-  U\Sigma V\t V\Sigma^+ U\t \| 
=\|U\t U-  U\t U\Sigma \Sigma^+ U\t U\| 
=\|I-  \Sigma \Sigma^+ \|.
\end{align*}
 Since $R$ has rank $ k $ (cf. Lemma \ref{lem:rank}), $\Sigma \Sigma^+$ is an $n\times n$ matrix with only $k$ of its diagonal entries as 1 and the remaining all entries being 0.  Therefore $ \norm{\Sigma \Sigma^+}=1 $. Likewise, $ \norm{I-\Sigma\Sigma^+}=1. $ This gives the required bound on $ \mu_1. $
 
For the other term $\mu_2$  we have by triangle inequality and by definitions of $ D,B, $
\begin{align*}
\mu_2(x_e) &\ge -\epsilon \cdot D \cdot (\|q\| + \|Mx_o\|) \ge -\epsilon \cdot D \cdot \|q\| - \epsilon \cdot D \cdot  \|M\|\cdot B,
\end{align*}
which completes the proof.
\end{proof}

Recall that $ D,B $ in the above lemma are finite since $ K $ is compact (cf. Lemma~\ref{lem:polytope}). In the following section, we complete the proof of Theorem \ref{thm:main}.

\subsection{Proof of Theorem \ref{thm:main}}
To prove Theorem \ref{thm:main}, we require the following result due to Candes and Tao~\cite{candes2006near} on optimal recovery from random measurements.

\begin{theorem} \label{thm:taorecov}
Suppose that $f \in \mathbbm{R}^n$ obeys $\|f\|_1 \le C_1$, and let $\alpha >0$ be a sufficiently small number (less than an absolute constant). Assume that we are given $k$ random measurements $y_i = \langle f, \Psi_i \rangle$, where $i \in \Omega$, $|\Omega| = k$ and $\{\Psi_i\}$ is a set of $k$- uniformly random orthonormal vectors. Then with probability 1, we have a unique minimizer $f^{\#}$ to the following problem:
\begin{equation*}
\underset{x \in \mathbbm{R}^n }{\text{min }} \|x\|_{1} \hspace{3pt} \text{ \st \quad $y_i = \langle x, \Psi_i \rangle, \quad \forall i=1,\hdots,k.$}
\end{equation*}
Furthermore, with probability at least $1 - O(n^{-1/\alpha})$, we have the approximation
\begin{equation*}
\|f - f^{\#}\|_2 \le C \cdot C_1 \cdot (k/\ln(n))^{-1/2}.
\end{equation*}
Here, $C$ is a fixed constant depending on $\alpha$ but not on anything else. The implicit constant in $O(n^{-1/\alpha})$ is allowed to depend on $\alpha$.
\end{theorem}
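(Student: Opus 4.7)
The plan is to prove the two claims separately: (i) that the $\ell_1$ minimization has a unique minimizer almost surely, and (ii) that the stated recovery bound on $\|f - f^\#\|_2$ holds with probability $1 - O(n^{-1/\alpha})$. The uniqueness claim follows from a \emph{general position} argument: since the random orthonormal vectors $\{\Psi_i\}_{i\in\Omega}$ are drawn from a continuous distribution on the Stiefel manifold (cf.\ Section~\ref{sec:constR}), the affine feasibility subspace $\{x \in \mathbbm{R}^n : \langle x,\Psi_i\rangle = y_i\ \forall i\in\Omega\}$ almost surely avoids the finitely many lower-dimensional faces of any dilated $\ell_1$ ball. The minimum of $\|\cdot\|_1$ on this subspace is thus attained at a single extreme point almost surely, giving the unique minimizer $f^\#$.

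For the recovery bound, I would follow the now-classical compressed sensing paradigm. Let $f_s$ denote the best $s$-term approximation to $f$, obtained by keeping the $s$ largest-magnitude coordinates and zeroing the rest. Sorting $|f|_{(1)} \ge |f|_{(2)} \ge \cdots$ and using $\|f\|_1 \le C_1$ gives $|f|_{(j)} \le C_1/j$, whence a standard weak-$\ell_1$ tail calculation yields $\|f - f_s\|_2 \le C'\cdot C_1 \cdot s^{-1/2}$. Choosing $s \asymp k/\ln n$ matches the target rate, so it suffices to show $\|f^\# - f\|_2 = O(\|f - f_s\|_2)$ with probability $1 - O(n^{-1/\alpha})$.

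The key technical step is the \emph{Uniform Uncertainty Principle} (UUP) / Restricted Isometry Property: with probability $1 - O(n^{-1/\alpha})$, for every $s$-sparse $x \in \mathbbm{R}^n$ with $s = \Theta(k/\ln n)$, the measurement map satisfies a two-sided isometry $(1-\theta)\|x\|_2 \le c_{n,k}\|\Psi x\|_2 \le (1+\theta)\|x\|_2$ for some $\theta < 1$. Establishing this requires controlling the operator norms of all $k\times s$ submatrices of $\Psi$ simultaneously via noncommutative Bernstein / Rudelson-type matrix concentration, combined with a union bound over the $\binom{n}{s}$ possible supports, which is where the $\ln n$ factor is incurred. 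Granted UUP, the remainder of the argument is structural: set $h := f^\# - f$, observe $\Psi h = 0$, and combine $\|f^\#\|_1 \le \|f\|_1$ with the triangle inequality restricted to the support $T$ of $f_s$ to derive the cone condition $\|h_{T^c}\|_1 \le \|h_T\|_1 + 2\|f - f_s\|_1$. Splitting $h_{T^c}$ into successive length-$s$ blocks sorted by magnitude, applying UUP block by block, and solving the resulting inequality bounds $\|h\|_2$ by a constant multiple of $\|f - f_s\|_2$.

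The hard part is clearly the UUP itself for uniformly random orthonormal measurement vectors. For i.i.d.\ Gaussian rows this is routine via matrix Bernstein, but the orthonormalization step of Section~\ref{sec:constR} introduces correlations among the rows of $\Psi$, and one must either analyze the Haar distribution on the Stiefel manifold directly or couple $\Psi$ to a Gaussian matrix and bound the resulting discrepancy uniformly over $s$-sparse supports. Both the rate $(k/\ln n)^{-1/2}$ and the failure probability $O(n^{-1/\alpha})$ — with $\alpha$ forced to be small enough to absorb the metric entropy of sparse unit vectors — emerge as the quantitative outputs of this concentration / chaining argument, and will be the most delicate part of the proof.
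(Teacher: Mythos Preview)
The paper does not prove this theorem. Theorem~\ref{thm:taorecov} is quoted as a result of Candes and Tao~\cite{candes2006near} and is used as a black box inside the proof of Theorem~\ref{thm:main} (specifically for the second statement, bounding $\|x_o - x^{\#}\|$). There is therefore no ``paper's own proof'' to compare your proposal against.

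That said, your sketch is broadly the right strategy for the Candes--Tao result: reduce to a best $s$-term approximation with $s \asymp k/\ln n$, establish a restricted isometry / uniform uncertainty principle for the random orthonormal measurement ensemble with failure probability $O(n^{-1/\alpha})$, and then run the now-standard cone-condition argument on $h = f^\# - f$. One caution: your claim that the $\ell_1$ minimizer is unique \emph{with probability $1$} via a general-position argument is more delicate than you suggest. The feasible affine subspace has dimension $n-k$, and the faces of the $\ell_1$ ball have dimensions up to $n-1$, so a naive dimension count does not rule out that the minimum is attained on a face of positive dimension; the argument needs to account for the specific structure of the random subspace and the $\ell_1$ geometry simultaneously. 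In Candes--Tao this uniqueness is in fact derived from the UUP rather than from an independent general-position argument, and only holds on the high-probability event, which is slightly different from the ``with probability $1$'' phrasing in the statement as reproduced here.
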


\noindent We now complete the proof of our main result, Theorem \ref{thm:main}.\\

\begin{proof}[Proof of Theorem \ref{thm:main}]
Let $\widetilde x$ be generated by Step \ref{step:detpre} by solving the lower-dimensional $ \AVI(\Ktilde,\Mtilde,\qtilde) $ and let $x_o \in K$ be a deterministic preimage of $ \xtilde $. From Lemma~\ref{lem:bound1} and Lemma~\ref{lem:bound2} that with probability greater than $p := (1 -  4\eta e^{-(\frac{\epsilon^2}{2} - \frac{\epsilon^3}{3})k/2})$, $ x_o $ satisfies
\begin{equation}
(x_e - x_o)\t (q + Mx_o) \ge  - \frac{n}{k} \|M\| B  - \epsilon \cdot  D \|q\| - \epsilon \cdot  D  \|M\| B \label{eq:temp0}, \quad \forall x_e \in \ext(K).
\end{equation}
Consider any $ y\in K $. There exist $ \alpha_1,\hdots,\alpha_\eta  \geq 0$ such that $ \sum_i \alpha=1 $ such that $ y=\sum_i x_i $ where $ x_1,\hdots,x_\eta $ are the extreme points of $ K. $ Multiplying the inequality in \eqref{eq:temp0} corresponding to each $ x_i \in \ext(K) $ by $ \alpha_i $, and adding over all $ i $, we get that under the event that \eqref{eq:temp0} is true, the event
\begin{equation}
(y - x_o)\t (q + Mx_o) \ge  - \frac{n}{k} \|M\| B  - \epsilon \cdot  D \|q\| - \epsilon \cdot  D  \|M\| B, \label{eq:temp3}\quad \forall y \in K, 
\end{equation}
is true. 
Consequently, $ x_o $ satisfies \eqref{eq:temp3} with probability at least $ p $. 
Since $ \delta $ is the confidence parameter, we set $p > (1-\delta)$. This necessitates that $ k > \frac{2\ln(\frac{4\eta}{\delta})}{(\frac{\epsilon^2}{2} - \frac{\epsilon^3}{3})} $. Consequently, with probability strictly greater than $ (1-\delta), $ $ x_o $ satisfies,
\begin{equation*}
(y - x_o)\t (q + Mx_o) \ge  - \frac{\left(\frac{\epsilon^2}{2}-\frac{\epsilon^3}{3}\right)n}{2\ln(\frac{4\eta}{\delta})} \|M\| B  - \epsilon \cdot  D \|q\|  
- \epsilon \cdot  D  \|M\| B, \quad \forall y \in K.
\end{equation*}
This proves the first statement of the theorem.

To prove the second statement, let $ x^* $ be the vector generated by Step~\ref{step:l1} of Algorithm A and let $ \alpha>0 $ be a small number as required by Theorem \ref{thm:taorecov}. By Theorem \ref{thm:taorecov}, we have with probability at least $1 - O(n^{-1/\alpha})$,
\begin{equation}
\|x_0 - x^*\| \le C  \|x_0\|_1  \leq CB' \left(\frac{k}{\ln(n)}\right)^{-1/2}, \label{eq:xo1}
\end{equation}
where $ B':= \max_{x\in K} \norm{x}_1. $ Let $ x^\# $ be the output of Algorithm A. Since $ x_o \in K$ and since the $ \ell_2 $-projection on the closed convex set $ K $ in Step \ref{step:projk} is non-expansive~\cite{facchinei03finiteI}, we have
\begin{align}
\|x_o - x^{\#}\| &= \norm{\Pi_K(x_o) - \Pi_K(x^*)} \le \|x_o - x^*\|. \label{eq:xo2}
\end{align}
Combining \eqref{eq:xo1} and \eqref{eq:xo2}, we get the second statement. The proof is complete.
\end{proof}

With this we conclude the theoretical portion of the paper. In the following section we point out some remarks about the algorithm, following which we present numerical results.

\section{Some remarks about the algorithm} \label{sec:remarks}

\subsection{Solving the lower dimensional AVI} \label{sec:lowerdim}
Although the lower-dimensional problem $ \AVI(\Ktilde,\Mtilde,\qtilde) $ is indeed an AVI, there is a practical difficulty in processing it. AVI solvers typically require the polyhedron defining the constraints to be given in its half-space representation and we do not have direct access to the  half-space representation of $\widetilde K$.  Following are two approaches to this issue. \\

\noindent\textbf{First approach:} We augment extra variables and convert the lower-dimensional AVI to a sparse larger-dimensional AVI. 
 Define a new augmented variable $ v = (\xtilde,x) \in \Real^{n+k}$.  
Instead of $ \AVI(\Ktilde,\Mtilde,\qtilde)$, the solver is supplied problem $\AVI(\hat K, \hat M, \hat q)$, where 
$$\hat q = \bigl[\begin{smallmatrix} \widetilde{q}\\ 0^{n \times 1} \end{smallmatrix} \bigr], \quad \hat M = \bigl[\begin{smallmatrix} \widetilde{M} & 0^{k \times n}\\ 0^{n \times k} & 0^{n \times n} \end{smallmatrix} \bigr], \quad \aur \quad \hat K = \{v \in \mathbb{R}^{n+k} | Cv = 0, Dv \le b\},$$ where $C= [I | -R\t ], D = [0 | A]\}$.
Although the problem now becomes a larger dimensional one, there is an enormous amount of sparsity in the formulation which can potentially be exploited by AVI solvers to solve this problem rather quickly. We have found this to be the case with the PATH solver and have used it in our numerical results.\\

\begin{table*}[!t]
\begin{center}
\resizebox{1\textwidth}{!} {
\begin{tabular}{c c c c c c c c c c}

$ n $	&	$ k $	&	$ m $	&	Natural Map Residual	&	Angle	&	Difference Norm	&	Major Low	&	Minor Low	&	Major High	&	Minor High \\ \hline 
100	&	5	&	10	&	8.68	&	142.20	&	5.34	&	4	&	9	&	7	&	2468	\\		
100	&	10	&	10	&	9.22	&	141.75	&	2.50	&	4	&	9	&	7	&	2468	\\		
100	&	30	&	10	&	7.84	&	141.38	&	1.80	&	4	&	11	&	7	&	2468	\\		
100	&	50	&	10	&	7.68	&	141.05	&	5.90	&	5	&	15	&	7	&	2468	\\		
100	&	70	&	10	&	6.43	&	142.14	&	7.27	&	5	&	15	&	7	&	2468	\\		
100	&	90	&	10	&	4.87	&	141.71	&	11.71	&	7	&	239	&	7	&	2468	\\	\hline 
150	&	5	&	15	&	12.12	&	141.55	&	6.09	&	4	&	12	&	3	&	1736	\\		
150	&	10	&	15	&	11.26	&	140.69	&	1.30	&	4	&	12	&	3	&	1736	\\		
150	&	30	&	15	&	11.05	&	141.94	&	2.96	&	5	&	16	&	3	&	1736	\\		
150	&	50	&	15	&	10.51	&	141.65	&	1.58	&	5	&	15	&	3	&	1736	\\		
150	&	80	&	15	&	9.03	&	141.70	&	2.83	&	6	&	21	&	3	&	1736	\\		
150	&	110	&	15	&	7.42	&	142.21	&	1.42	&	5	&	160	&	3	&	1736	\\		
150	&	135	&	15	&	5.27	&	140.33	&	6.36	&	7	&	113	&	3	&	1736	\\		\hline 
200	&	5	&	16	&	12.68	&	141.68	&	1.46	&	5	&	15	&	2	&	2201	\\
200	&	10	&	16	&	12.39	&	141.59	&	1.47	&	4	&	13	&	2	&	2201	\\
200	&	30	&	16	&	12.33	&	142.22	&	2.13	&	5	&	19	&	2	&	2201	\\
200	&	50	&	16	&	12.32	&	141.61	&	1.90	&	5	&	22	&	2	&	2201	\\
200	&	80	&	16	&	11.65	&	141.37	&	2.18	&	6	&	29	&	2	&	2201	\\
200	&	110	&	16	&	9.92	&	141.44	&	1.29	&	5	&	21	&	2	&	2201	\\
200	&	140	&	16	&	8.74	&	140.94	&	3.74	&	6	&	53	&	2	&	2201	\\
200	&	180	&	16	&	5.51	&	141.28	&	3.13	&	8	&	128	&	2	&	2201	\\	\hline	
250	&	5	&	25	&	15.03	&	141.87	&	2.27	&	4	&	14	&	3	&	3031	\\		
250	&	10	&	25	&	14.75	&	141.22	&	4.22	&	4	&	17	&	3	&	3031	\\		
250	&	30	&	25	&	14.98	&	141.75	&	2.43	&	4	&	19	&	3	&	3031	\\		
250	&	50	&	25	&	14.16	&	141.28	&	3.82	&	5	&	22	&	3	&	3031	\\		
250	&	70	&	25	&	13.84	&	141.65	&	2.33	&	5	&	28	&	3	&	3031	\\		
250	&	100	&	25	&	13.20	&	141.03	&	6.31	&	6	&	45	&	3	&	3031	\\		
250	&	130	&	25	&	12.13	&	140.69	&	2.42	&	5	&	28	&	3	&	3031	\\		
250	&	160	&	25	&	10.88	&	141.25	&	2.33	&	5	&	27	&	3	&	3031	\\		
250	&	190	&	25	&	9.49	&	141.31	&	13.29	&	6	&	230	&	3	&	3031	\\		
250	&	225	&	25	&	2.58	&	141.49	&	496.15	&	8	&	140	&	3	&	3031	\\	\hline	

\end{tabular}}
\end{center}
\caption{Performance of Algorithm A on randomly generated test problems where each entry of $ M,q,A,b $ was chosen from $ N(0,1) $.} \label{tab:approx}
\end{table*}

\begin{table*}
\begin{center}
\resizebox{1\textwidth}{!} {

\begin{tabular}{c c c c c c c c c c}

$ n $	&	$ k $	&	$ m $	&	Natural Map Residual	&	Angle	&	Difference Norm	&	Major Low	&	Minor Low	&	Major High	&	Minor High	\\	\hline
100	&	5	&	10	&	2.54	&	131.67	&	1.08	&	4	&	9	&	2	&	1132	\\		
100	&	10	&	10	&	2.55	&	123.58	&	1.04	&	4	&	9	&	2	&	1132	\\		
100	&	30	&	10	&	2.44	&	122.26	&	1.97	&	5	&	11	&	2	&	1132	\\		
100	&	50	&	10	&	2.01	&	125.92	&	1.01	&	5	&	12	&	2	&	1132	\\		
100	&	70	&	10	&	1.82	&	119.70	&	2.76	&	8	&	17	&	2	&	1132	\\		
100	&	90	&	10	&	1.15	&	107.13	&	125.55	&	7	&	911	&	2	&	1132	\\		\hline
150	&	5	&	15	&	3.06	&	124.51	&	1.33	&	4	&	11	&	7	&	3404	\\		
150	&	10	&	15	&	3.17	&	123.56	&	2.02	&	4	&	12	&	7	&	3404	\\		
150	&	30	&	15	&	3.13	&	118.31	&	2.82	&	5	&	14	&	7	&	3404	\\		
150	&	50	&	15	&	3.04	&	120.46	&	3.37	&	6	&	18	&	7	&	3404	\\		
150	&	80	&	15	&	2.58	&	120.34	&	2.70	&	6	&	21	&	7	&	3404	\\		
150	&	110	&	15	&	2.02	&	108.59	&	4.24	&	7	&	16	&	7	&	3404	\\		
150	&	135	&	15	&	1.36	&	109.33	&	4.93	&	5	&	47	&	7	&	3404	\\		\hline	
200	&	5	&	16	&	3.59	&	119.97	&	1.03	&	4	&	9	&	3	&	2332	\\		
200	&	10	&	16	&	3.59	&	123.81	&	1.09	&	5	&	13	&	3	&	2332	\\		
200	&	30	&	16	&	3.75	&	115.21	&	12.22	&	6	&	214	&	3	&	2332	\\		
200	&	50	&	16	&	3.63	&	120.52	&	1.24	&	5	&	15	&	3	&	2332	\\		
200	&	80	&	16	&	3.35	&	118.66	&	1.41	&	6	&	16	&	3	&	2332	\\		
200	&	110	&	16	&	3.05	&	114.44	&	1.27	&	6	&	18	&	3	&	2332	\\		
200	&	140	&	16	&	2.35	&	113.24	&	165.33	&	8	&	33	&	3	&	2332	\\		
200	&	180	&	16	&	1.62	&	106.38	&	1.84	&	8	&	1287	&	3	&	2332	\\	\hline	
250	&	5	&	25	&	4.14	&	119.08	&	1.44	&	4	&	14	&	3	&	2929	\\		
250	&	10	&	25	&	4.03	&	119.81	&	1.32	&	4	&	16	&	3	&	2929	\\		
250	&	30	&	25	&	4.15	&	117.70	&	2.12	&	5	&	13	&	3	&	2929	\\		
250	&	50	&	25	&	4.08	&	116.17	&	1.70	&	6	&	20	&	3	&	2929	\\		
250	&	70	&	25	&	3.90	&	118.32	&	1.50	&	6	&	29	&	3	&	2929	\\		
250	&	100	&	25	&	3.61	&	115.63	&	9.64	&	7	&	34	&	3	&	2929	\\		
250	&	130	&	25	&	3.39	&	114.22	&	2.15	&	8	&	37	&	3	&	2929	\\		
250	&	160	&	25	&	2.97	&	111.25	&	4.20	&	8	&	364	&	3	&	2929	\\		
250	&	190	&	25	&	2.57	&	109.61	&	2.92	&	8	&	79	&	3	&	2929	\\		
250	&	225	&	25	&	1.79	&	104.41	&	1.49	&	7	&	1381	&	3	&	2929	\\	\hline	
\end{tabular}}
\end{center}
\caption{Performance of Algorithm A on randomly generated test problems where each entry of $ M,q,A,b $ was chosen from $ U[0,1] $.} \label{tab:approxunif}
\end{table*}


\noindent\textbf{Second approach:} Alternatively, one may try and deduce the half-space representation of $\widetilde K$. 
To do so, first, complete the random projection matrix $R$ by filling in the remaining entries (denote these entries by $\Delta^{n \times (n-k)}$) by picking the remaining set of $(n-k)$ uniformly random orthonormal vectors in exactly the same manner as described in Section \ref{sec:constR}. Let this complete $ n\times n $ matrix be denoted by $\hat{R}$. If $ K=\{x \in \Real^n| Ax \leq b\}, $ let $ \hat K=\{y \in \Real^n| \hat A y \leq b \}, $ where $ \hat A = \sqrt{\frac{k}{n}}A((\hat{R}\t )^{-1}. $  Note that $\hat{R}=[R   \Delta^{n \times (n-k)}]$, which implies that the first $ k $ components of a vector $y \in \hat K$ comprise the vector $\widetilde{x} = \proj x$ which is an element of $ \Ktilde $. 

Thus the required polytope $ \Ktilde $ is the Euclidean projection of $ \hat K $ on the first $ k $ components. To obtain the half-space representation for this polytope one may make use of a method such as Fourier-Motzkin elimination~\cite{dantzig1973fourier}. Unfortunately, Fourier-Motzkin elimination is known to have poor complexity and could potentially nullify any advantages of dimensionality reduction. 

Converting the half-space representation of a polyhedron to its vertex-representation and vice-versa is a fundamental combinatorial problem. 
Indeed, a side-story of our algorithm is the  bringing to fore of the combinatorial nature of polyhedra and indeed of the AVI, which has otherwise been suppressed in ``continuous'' optimization efforts.

\subsection{On the lower-dimension}
The lower dimension $ k $ is required to be of the order of $ \ln (\eta/\delta)/ \epsilon^2 $. By introducing slack variables if necessary, we may assume without loss of generality that the polytope $ K $ is represented as $ \{x' \ |\ A' x'=b', x'_I\geq 0 \} $ where $ x'_I=(x'_i)_{i\in I}$ and that the given $\AVI(K,M,q)  $ is specified in the space of $ x' $. Suppose $ n $ the dimension of $ x'. $ If $ A' $ is full row rank, then $ \eta = |\ext(K)| \leq \binom{n}{m}$ where $ m $ is the number of rows of $ A' $ (this follows from the argument used for bounding the number of basis feasible solutions in a linear program; see example~\cite[Ch. 3]{bazaraa2011linear}). Consequently, for fixed $ m,$ we have $ \ln(\eta) =O(\ln n)$ and the lower dimension $ k = O(\ln n/\delta)/\epsilon^2 $. In general, by Stirling's approximation, $ \ln \binom{n}{m} \simeq n H\left(\frac{m}{n}\right)$ where $ H(t) \equiv -t \ln t -(1-t)\ln (1-t) $ is the entropy function~\cite[p.\ 2]{mackay2003information}, and hence $ \ln \binom{n}{m} $ is at most $ n $ (this is an approximate statement, the quality of which depends on the quality of Stirling's approximation).

The term $ \eta $ appears because to solve the AVI, we have to satisfy $ \eta $-many inequalities, and $ \eta $-many inner products have to be \textit{simultaneously} preserved in the JL lemma. Notice, however, that each fixed $ y \in K$ can be written as a convex combination of $ n+1 $ points from $ \ext(K) $ (by Caratheodory's theorem~\cite{rockafellar97convex}). Thus for each $ y \in K $, we only need to simultaneously preserve $ n+1 $ inner products. Thus for any $ y\in K $ the inequality
\[ (y-x_o)\t (Mx_o+q) \geq 0, \]
holds with probability $ 1-4(n+1)e^{-(\frac{\epsilon^2}{2}-\frac{\epsilon^3}{3})k/2} $, whereby for this event to hold with probability $ >1-\delta, $ we need $ k $ to be only of the order of $ \ln \left(\frac{4(n+1)}{\delta}\right)/\epsilon^2. $ It is for showing that ``$ (y-x_o)\t (Mx_o+q) \geq 0 $'' holds \textit{for all} $ y\in K $ that one requires $k \sim  \ln (\eta/\delta)/ \epsilon^2. $
One may interpret this issue also to be a manifestation of the combinatorial nature of the polyhedron.

Another way one may interpret this matter is via the V-representation of polyhedra~\cite{ziegler1995lectures}. In the V-representation the shape of $ K $ is defined by its $ \eta $ extreme points and $ \eta $ is thus the indicator of the ``complexity'' of this shape. On the other hand in the H-representation, the shape of the polyhedron is determined by the number of half-spaces necessary to describe it. 

\subsection{Construction of $\widetilde M$} 
Notice that while $\widetilde q$ and $\widetilde K$ are projections (in the sense of Section~\ref{sec:construction}) of $ q $ and $ K $, respectively,  $\widetilde M$ is \textit{not} a projection of $ M. $ Rather $ \Mtilde $ may be viewed as the projection of a matrix that is akin to a least squares approximation to $ M. $ We explain this below. 

For the lower-dimensional problem to be an AVI,  we require that in addition to $ \Mtilde $ being a projection of some matrix, $ \widetilde{M} $ must also be compatible with right-multiplication by $ \xtilde=\proj x. $ Thus $ \Mtilde $ must be of the form $ \Mtilde =  R\t M' $, where $ M' =XR\t$ and $ X $ is the matrix to be determined. Now since the eventual error depends on $\|M - M'\|$, in our construction we further let $M'$ to be of the form $MZR\t $ where now $Z$ is to be determined. To minimize the error, we require $Z$ to  be such that  $ZR\t $ is as close to the identity as possible, \ie, $ Z $ must solve the following ``least-squares'' optimization problem:
\begin{align*}
\underset{Z \in \mathbbm{R}^{n \times k}}{\min} \|ZR\t  - I\|^{2}.
\end{align*}
The solution of the above problem is  $Z  = R(R\t R)^{-1} = R$, since $ R $ is orthonormal. This gives the construction of $ \Mtilde $ as $ \Mtilde=R\t M R. $ In addition to being suitable for minimizing the error, we note that $ \Mtilde $ enjoys the property that $ \Mtilde $ is positive semidefinite if $ M $ is positive definite.

\section{Numerical Results} \label{sec:sim}
We now present numerical results to show how our algorithm performs in practice.  

To keep the trials generic, test problems were generated randomly. The input to the algorithm was given as $ \AVI(K,M,q) $ where we took $K = \{ x \in \mathbbm{R}^n | Ax \le b, L \le x \le U\}$. The bounds $L,U$ on $x$ were introduced to ensure that $K$ is compact.  
To generate a random input problem, all the entries of $q$, $M$, $A$ and $b$ were generated randomly with entries drawn from distributions $ N(0,1) $ and $ U[0,1] $; $ -L,U $ were set to be large.
We applied our algorithm with various choices of the lower dimension $ k $, for each value of the higher dimension $ n. $ All AVIs were solved using the PATH solver~\cite{pathweb}. 

The sections below show the performance of the algorithm for two applications. Section \ref{sec:approx} discusses results for obtaining an approximate solution to the given AVI (\ie, Algorithm A). In Section \ref{sec:exact}, we apply Algorithm A to solve the given AVI exactly by using Algorithm A to generate an  initial point for the solver.

\subsection{Performance for an approximate solution} \label{sec:approx}
Table~\ref{tab:approx} contains results pertaining to Algorithm A. To benchmark the performance of the algorithm, the given AVI was approximately by Algorithm A and also solved exactly. Since the projections were random, the results reported for each test problem are the average of the results over 10 random choices of $ R $. Table~\ref{tab:samples} shows the behavior across different random trials for a representative test problem.

\begin{table*}[!t]
\begin{center}
\resizebox{1\textwidth}{!} {

\begin{tabular}{c c c c c c c c c c}

$ n $	&	$ k $	&	$ m $	&	Natural Map Residual	&	Angle	&	Difference Norm	&	Major Low	&	Minor Low	&	Major High	&	Minor High	\\	\hline	
150	&	30	&	15	&	11.53	&	142.05	&	13.61	&	8	&	39	&	3	&	1736		\\
150	&	30	&	15	&	9.98	&	142.24	&	1.11	&	3	&	8	&	3	&	1736		\\
150	&	30	&	15	&	9.22	&	140.96	&	1.03	&	4	&	8	&	3	&	1736		\\
150	&	30	&	15	&	11.67	&	143.69	&	2.12	&	6	&	19	&	3	&	1736		\\
150	&	30	&	15	&	9.84	&	140.42	&	1.04	&	3	&	9	&	3	&	1736		\\
150	&	30	&	15	&	10.30	&	141.12	&	5.62	&	7	&	24	&	3	&	1736		\\
150	&	30	&	15	&	9.91	&	143.82	&	1.13	&	4	&	14	&	3	&	1736		\\
150	&	30	&	15	&	10.14	&	143.04	&	1.13	&	4	&	16	&	3	&	1736		\\
150	&	30	&	15	&	10.53	&	141.18	&	1.23	&	3	&	9	&	3	&	1736		\\
150	&	30	&	15	&	11.25	&	140.83	&	1.61	&	4	&	11	&	3	&	1736		\\
\hline 
\end{tabular}}
\end{center}
\caption{Results for Algorithm A with different samples of the random projection (for a test problem generated by taking each entry of $ M,q,A,b $ from $ N(0,1) $).} \label{tab:samples} 
\end{table*}

\begin{table*}[!t]
\begin{center}
\resizebox{1\textwidth}{!} {

\begin{tabular}{c c c c c c c c c}
\\ 
																										$ n $	&	$ k $	&	$ m $	&	Major High	&	Minor High	&	Major Additional	&	Minor Additional	&	Major Total	&	Minor Total	\\		\hline
100	&	5	&	10	&	40	&	9885	&	16	&	3699	&	19	&	3708	\\		
100	&	10	&	10	&	40	&	9885	&	2	&	690	&	6	&	700	\\		
100	&	25	&	10	&	40	&	9885	&	9	&	2478	&	14	&	2494	\\		
100	&	50	&	10	&	40	&	9885	&	10	&	2784	&	15	&	2802	\\		
100	&	75	&	10	&	40	&	9885	&	16	&	4088	&	21	&	4107	\\		\hline 
125	&	5	&	12	&	12	&	4354	&	4	&	1821	&	9	&	1834	\\		
125	&	10	&	12	&	12	&	4354	&	7	&	2873	&	12	&	2886	\\		
125	&	25	&	12	&	12	&	4354	&	11	&	4223	&	16	&	4236	\\		
125	&	50	&	12	&	12	&	4354	&	7	&	2693	&	12	&	2712	\\		
125	&	75	&	12	&	12	&	4354	&	8	&	3392	&	14	&	3414	\\		
125	&	100	&	12	&	12	&	4354	&	8	&	3185	&	14	&	3205	\\	\hline 	
150	&	5	&	15	&	19	&	7356	&	5	&	2574	&	10	&	2587	\\		
150	&	10	&	15	&	19	&	7356	&	8	&	3940	&	12	&	3952	\\		
150	&	30	&	15	&	19	&	7356	&	5	&	2521	&	9	&	2535	\\		
150	&	60	&	15	&	19	&	7356	&	4	&	2252	&	9	&	2272	\\		
150	&	75	&	15	&	19	&	7356	&	7	&	3468	&	13	&	3495	\\		
150	&	90	&	15	&	19	&	7356	&	4	&	2004	&	12	&	2037	\\		
150	&	120	&	15	&	19	&	7356	&	4	&	2011	&	10	&	2101	\\	\hline	
	
\end{tabular}}
\end{center}
\caption{Performance of the exact algorithm (test problems generated by choosing each entry of $ M,q,A,b $ independently from $ N(0,1) $)} \label{tab:exact}
\end{table*}

The columns of Tables \ref{tab:approx}, \ref{tab:approxunif} and \ref{tab:samples} contain the following entries.
\begin{itemize}
\item $n$ -- dimension of the $ \AVI(K,M,q) $ given as input to Algorithm A; $m$ -- number of rows of the matrix $ A $ in the definition of $K$; $k$ -- dimension to which we project the given AVI.
\item Natural Map Residual  $= \frac{\|\fnat_K(x^{\#}) \|}{\|x^{\#}\|+1} $, where $ x^\# $ is the output of  Algorithm A. This quantity should be $ 0 $ for $ x^\# $ to be a solution. We normalize by $ (\norm{x^\#}+1 )$ in order to allow for comparisons across difference values of $ n,k $.
\item Angle -- the largest angle between $ q+Mx^\# $ and a vector $ y-x^\# $ as $ y $ ranges over $ K $, where $ x^\# $ is as above. Define $\beta = \underset {y \in K}{\min}(y-x^{\#})\t (q + Mx^{\#})$ and let $y^*$ be the corresponding minimizer, then
\[\text{Angle} = \arccos\left(\frac{\beta}{\|y^* - x^{\#}\| \cdot \|q + Mx^{\#} \|}\right).\]
\item Difference Norm -- norm of the difference between $x^{\#}$ and the exact solution computed by solving the AVI directly (denoted $\bar x$), normalized by the norm of the exact solution, i.e., 
\[\text{Difference Norm} = \frac{\|x^{\#} - \bar x \|}{\|\bar x\|+1}. \] 
\item Major/Minor Low - number of major/minor iterations reported by PATH~\cite{pathweb} to obtain a solution using our algorithm
\item Major/Minor High - number of major/minor iterations reported by PATH~\cite{pathweb} to solve the higher dimensional problem directly
\end{itemize}

For every tuple $(n,m,k)$, 10 independent simulations were carried out, realizing a different random matrix $R$ each time. All the parameters in Table~\ref{tab:approx} and Table~\ref{tab:approxunif} are  the average values over the set of 10 trials. The results are as given in Table \ref{tab:approx} (for test problems generated from $ N(0,1) $) and Table \ref{tab:approxunif} (for test problems generated from $ U[0,1] $).

\subsubsection{Key observations}
The first observation to be made from Tables~\ref{tab:approx} and~\ref{tab:approxunif} is that the algorithm produces only an approximation, which is evident from the fact that the natural map residual is non-zero and ``Angle'' is greater than $90^o$.  However, for each $n$, the normalized natural map residual decreases monotonically with the value of lower dimension $k$. Clearly, as we increase the value of the lower dimension, the natural map residual decreases towards zero. Furthermore, in Table~\ref{tab:approxunif}, one sees a decrease in the ``Angle'' with increasing $ k $, for each $ n. $ Interestingly, this decrease is not seen in Table~\ref{tab:approx}. We do not know of a way of explaining this.

The most important observation is that for every case, the number of minor iterations (minor low) consumed by our algorithm is significantly lower than the case where the high dimensional problem is attempted to solve directly (minor high). Though results corresponding to major iterations are inconclusive on a whole, this validates our approach of finding a quick approximation. Notice that ``Difference norm'' is \textit{not} monotonic. This may be due to possible non-uniqueness of the solution of the AVI.

Recall that Tables~\ref{tab:approx} and \ref{tab:approxunif} are average values of multiple trials of $ R. $ For a fixed problem and a fixed $n$, $k$ and $m$, a representative set of results from different choices of $ R $ are reported in Table~\ref{tab:samples}. One can see that the variation across different trials is relatively small and the average values reported Tables~\ref{tab:approx} and \ref{tab:approxunif} are representative.

\subsection{Performance for an exact solution} \label{sec:exact}
For these results, the computation was carried out in two steps. First, an approximate solution was obtained using our algorithm, like in the previous section. Next, this vector was supplied as an initial point into the AVI solver and an exact solution to the original AVI was computed. The original problem was independently solved using the solver directly with a random initial point, and the performance was compared.  The results for this case have been tabulated in terms of the following parameters:
\begin{itemize}
\item $n, k, m$ and Major/Minor High as in Section \ref{sec:approx}
\item Major/Minor Total - total number of major/minor iterations reported by PATH to obtain a final solution with our approximate solution as an initial point
\item Major/Minor Additional - number of extra major/minor iterations reported by PATH to solve the higher dimensional problem {\it after} the initial point has been supplied from our algorithm
\end{itemize}

\subsubsection{Key observations}
Notice that the solution obtained in this case exactly solves our original high dimensional AVI. 
Minor Total is lower in all the test cases than the corresponding number for when the problem is solved directly. 
The additional iterations, both major and minor,  consumed on the higher dimensional problem, are significantly lower than the corresponding values for when the problem is solved directly. This implies that when our recovered solution is supplied as an initial point to the solver, it computes an exact solution faster than the direct case, where an initial point is generated randomly.

\section{Conclusions} \label{sec:disc}
Motivated by emerging problems in `Big Data', this paper has presented a new method for dimensionality reduction  of AVIs with compact feasible regions. The method yields with high probability an approximate solution to the given AVI by solving an AVI of a  lower dimension; the latter is formed by appropriately projecting the given AVI on a lower-dimensional space. Using the approximate solution as an intial point, the method can also be used to `hot start' a solver for the given problem and thereby find an exact solution. We presented numerical results to demonstrate that the method is indeed effective in practice.

\section*{Acknowledgments}
The authors would like to thank Dr. Dinesh Garg of IBM Research, India for his inputs on this topic.
\bibliographystyle{spmpsci}

\bibliography{references_bib}

\end{document}